\newtheoremstyle{thm}
     {.8ex plus .3ex minus .1ex}
     {.8ex plus .3ex minus .1ex}
     {\itshape}
     {}
     {\bfseries}
     {---}
     {0em}
     {$\bullet$\hbox{\ }#1\hbox{\ }#2}
\theoremstyle{thm}
\newtheorem{definition}{Definition}[section]
\newtheorem{theorem}[definition]{Theorem}
\newtheorem{lemma}[definition]{Lemma}
\newtheorem{proposition}[definition]{Proposition}
\newtheorem{corollary}[definition]{Corollary}
\newtheoremstyle{note}
     {1ex plus .3ex minus .1ex}
     {1ex plus .3ex minus .1ex}
     {}
     {}
     {\itshape}
     {.}
     {1em}
     {}
\theoremstyle{note}
\newtheorem*{remark}{Remark}
\newcommand{\tq}{\;:\;}
\newcommand{\indep}{\parallel}
\newcommand{\up}[1]{\,\uparrow #1}
\newcommand{\ZZ}{\mathbb{Z}}
\newcommand{\pr}{\mathbb{P}}
\newcommand{\esp}{\mathbb{E}}
\newcommand{\B}{\mathcal{B}}
\newcommand{\C}{\mathscr{C}}
\newcommand{\Cstar}{\mathfrak{C}}
\newcommand{\RR}{\mathbb{R}}
\newcommand{\FFF}{\mathfrak{F}}
\newcommand{\slgb}{\mbox{$\sigma$-al}\-ge\-bra}
\newcommand{\pas}{\text{$\pr$-a.s.}}
\newcommand{\M}{\mathcal{M}}
\newcommand{\Mbar}{\overline{\M}}
\renewcommand{\hbar}{\widetilde{h}}
\newcommand{\R}{\mathscr{R}}
\renewcommand{\H}{\mathcal{H}}
\newcommand{\height}{\tau}
\newcommand{\MH}{\mathsf{MH}}
\renewcommand{\B}{\partial}
\newcommand{\un}[1]{\mathbf{1}_{\{#1\}}}
\newsavebox{\mycaptionbox}
\savebox{\mycaptionbox}{\parbox[t]{.8\textwidth}{\raggedright\slshape A trace $u$ with Cartier-Foata normal form
    $c_1\to\ldots\to c_n$ and a generic element $x\in\M(u)$ with
    Cartier-Foata normal form $(c_1\cdot\gamma_1)\to\ldots\to(c_n\cdot\gamma_n)$\,.}}
\newlength{\firstl}
\newlength{\secondl}
\newlength{\finall}
\newcommand{\firstc}{A Graded M\"obius Transform}
\newcommand{\secondc}{and its Harmonic Interpretation}
\newcommand{\format}[1]{\makebox[\finall][s]{#1}} \newsavebox{\firstb}
\newsavebox{\secondb}
\newcommand{\lfaast}{%
  \savebox{\firstb}{\firstc}%
  \savebox{\secondb}{\secondc}%
  \setlength{\firstl}{\wd\firstb}%
  \setlength{\secondl}{\wd\secondb}%
  \ifnum\firstl>\secondl\setlength{\finall}{\firstl}\else
  \setlength{\finall}{\secondl}\fi\par
  \format{\firstc}\\
\format{\secondc}%
  }
\begin{document}
\mainmatter
\strut\vspace{-4em}
\begin{center}
\huge\bfseries\sffamily
\lfaast\par
\normalfont
\end{center}

\medskip

\begin{center}
\begin{tabular}{c}
  \Large\sffamily Samy Abbes\\
  \small University Paris Diderot -- Paris~7\\
  \small CNRS Laboratory PPS (UMR 7126)\\
  \small Paris, France\\
  \small
  \ttfamily\footnotesize  samy.abbes@univ-paris-diderot.fr
\end{tabular}
\end{center}


\begin{abstract}
  We give a graded version of the M\"obius inversion formula in the
  framework of trace monoids. The formula is based on a graded
  version of the M\"obius transform, related to the notion of height
  deriving from the Cartier-Foata normal form of the elements of a
  trace monoid.

  Using the notion of Bernoulli measures on the boundary of a trace
  monoid developed recently, we study a probabilistic interpretation
  of the graded inversion formula.  We introduce M\"obius harmonic
  functions for trace monoids and obtain an integral representation
  formula for them, analogous to the Poisson formula for harmonic
  functions associated to random walks on trees.
\end{abstract}


{\raggedright \textbf{AMS Subject Classification:} 20E05 60J45 60B15
  06F05 

\textbf{Keywords: } M\"obius transform, trace monoid,
harmonic function, Poisson formula
}

\section*{Introduction}

A random walk on a regular oriented tree of finite degree, which we
identify with a free monoid~$\Sigma^*$, is specified by a probability
distribution $f$ on the finite set $\Sigma$ of generators. In turn,
the law of the trajectories of the random walk is a Bernoulli measure
on the space of $\Sigma$-valued infinite sequences, which identifies
with the boundary at infinity $\B\Sigma^*$ of the tree. Recall that,
if we denote by~$\up x$\,, for $x$ ranging over~$\Sigma^*$, the set of
infinite sequences of which $x$ is a prefix, then Bernoulli measures
on $\B\Sigma^*$ are characterized by the multiplicative property:
$\pr\bigl(\up(xy)\bigr)=\pr(\up x)\pr(\up y)$\,, valid for all
$x,y\in\Sigma^*$\,.

Let $P$ denote the Markov operator acting on real valued functions
defined on~$\Sigma^*$, and such that
$P\lambda(x)=\sum_{a\in\Sigma}f(a)\lambda(xa)$ for all $x\in\Sigma^*$
and for all functions $\lambda:\Sigma^*\to\RR$\,. Harmonic functions
relative to the pair $(\Sigma^*,\pr)$ are those functions
$\lambda:\Sigma^*\to\RR$ such that $P\lambda=\lambda$, hence in the
kernel of the discrete Laplace operator $\Delta=I-P$. It is well known
that bounded harmonic functions are in a linear and isometric
one-to-one correspondence with measurable and essentially bounded
real valued functions defined on~$\B\Sigma^*$\,, through the Poisson
formula:
\begin{equation}
  \label{eq:1}
  \forall x\in\Sigma^*\quad\lambda(x)=\frac1{\pr(\up x)}\int_{\up
    x}\varphi(\xi)\,d\pr(\xi)\,,\quad \varphi\in L^\infty(\B\Sigma^*)\,.
\end{equation}

If $\lambda$ is bounded harmonic, then $\varphi$ is obtained as the
\pas\ limit of the bounded martingale
$(\lambda(X_n),\FFF_n)_{n\geq1}$\,, where $(X_n)_{n\geq1}$ is the
random walk on $\Sigma^*$ and $\FFF_n=\sigma\langle
X_1,\ldots,X_n\rangle$\,.

In this paper, we study the notion of harmonicity in a slightly
different framework. First, instead of considering a free
monoid~$\Sigma^*$\,, we allow some generators to commute with each
other, and consider thus free partially commutative monoids, usually
called trace monoids~\cite{diekert90}, and referred to in the
literature---also in the case of groups---as to heap
monoids~\cite{viennot86}, locally free monoids with respect to a
graph~\cite{VNBi,malyutin05}, partially commutative
monoids~\cite{cartier69}, graph monoids~\cite{fisher89}. Hence a trace
monoid $\M$ is a finitely presented monoid of the form
$\M=\Sigma^*/\R$\,, where $\R$ is the congruence relation generated by
pairs of the form $(ab,ba)$, for $(a,b)$ ranging over a given
symmetric and irreflexive relation $I$ on~$\Sigma$, called an
independence relation. The elements of a trace monoids are called
traces. Much of the above framework for free monoids can be transposed
to trace monoids. In particular, there is a natural notion of infinite
trace---which corresponds to a simplified version of the stable normal
form for infinite words in the sense of~\cite{vershik00}. The boundary
at infinity $\B\M$ of the monoid~$\M$ is defined as the set of
infinite traces. An elementary cylinder~$\up u$, for $u\in\M$, is
defined as the subset of those infinite traces of which $u$ is a
prefix: $\up u=\{\xi\in\B\M\tq u\leq \xi\}$\,, and
$\FFF=\sigma\langle\up u\tq u\in\M\rangle$ is the \slgb\ that
equips~$\B\M$\,.

Second, instead of considering random walks on a trace monoid, we
directly consider Bernoulli measures on the boundary of the monoid. A
Bernoulli measure on a trace monoid is defined as a probability
measure $\pr$ on $(\B\M,\FFF)$, such that the following multiplicative
property holds:
\begin{equation}
  \label{eq:5}
  \forall u,v\in\M\quad \pr\bigl(\up(u\cdot v)\bigr)=\pr(\up u)\pr(\up v)\,.
\end{equation}

In a recent work with J.~Mairesse~\cite{abbesmair14}, we have
conducted a thorough study of Bernoulli measures for trace monoids, by
showing how to characterize them through probabilistic parameters and
by giving an explicit construction of them by means of the
combinatorial structure of the monoid. Connecting them with more
familiar objects usually found in this journal, Bernoulli measures can
be seen as weighted Patterson-Sullivan measures. However, they are
\emph{not} given as the law of entrance of a random walk into the
boundary at infinity of the monoid, except if the trace monoid reduces
to a free monoid, which corresponds to the empty independence relation
$I=\emptyset$\,. Hence, the framework found in
\cite{VNBi,vershik00,malyutin05} for instance does not apply for
Bernoulli measures.

The main topic of this paper is the notion of harmonicity in the
framework of trace monoids equipped with Bernoulli measures. In order
to obtain a dual representation between bounded functions on the
boundary~$\B\M$ on the one hand, and functions defined on $\M$
invariant with respect to a certain linear operator, consider a pair
$(\M,\pr)$ where $\M$ is a trace monoid and $\pr$ is a Bernoulli
measure on~$\B\M$, and let $\varphi\in L^\infty(\B\M)$. We define an
associated function \mbox{$\lambda:\M\to\RR$} by:
\begin{equation}
  \label{eq:6}
  \forall u\in\M\quad \lambda(u)=\frac1{\pr(\up u)}\int_{\up u}\varphi(\xi)\,d\pr(\xi)\,.
\end{equation}

Then we observe that the function $\lambda$ satisfies the following relation:
\begin{equation}
  \label{eq:18}
  \forall u\in\M\quad\sum_{c\in\C}(-1)^{|c|}f(c)\lambda(u\cdot c)=0\,,
\end{equation}
where $\C$ denotes the set of cliques of the finite graph
$(\Sigma,I)$\,, and $f:\M\to\RR$ is the multiplicative function on
$\M$ defined by $f(u)=\pr(\up u)$\,. Because of the deep relationship
between the expression~(\ref{eq:18}) and the M\"obius polynomial of
the pair~$(\Sigma,I)$, defined by
$\mu_\M(X)=\sum_{c\in\C}(-1)^{|c|}X^{|c|}$\,, we call the operator
$\Delta$ acting on functions $\lambda:\M\to\RR$ by:
\begin{equation}
  \label{eq:45}
  \forall
  u\in\M\quad\Delta\lambda(u)=\sum_{c\in\C}(-1)^{|c|}\lambda(u\cdot c)\,,
\end{equation}
the M\"obius-Laplace operator on~$\M$\,; functions in the kernel
of~$\Delta$, we call M\"obius harmonic. 

Decomposing cliques according to their size, $\Delta$~writes as:
\begin{equation}
  \label{eq:46}
  \Delta=I-P\,,\quad P\lambda(u)=\sum_{a\in\Sigma}f(a)\lambda(u\cdot
  a)-\sum_{c\in\C\tq|c|\geq2}(-1)^{|c|}f(c)\lambda(u\cdot c)\,.
\end{equation}

Hence, as it turns out, M\"obius harmonic functions do not have an
obvious interpretation as invariant functions with respect to a Markov
operator; for $P=I-\Delta$ is not a positive operator, unless the
trace monoid reduces to the free monoid~$\Sigma^*$\,. This
contrasts with the case of random walk on
trees~\cite{cartier72,mouton00}, but also on more general hyperbolic
structures~\cite{kaimanovich96:,kaimanovich00}.

Nevertheless, the correspondence through the Poisson formula still
holds: the main result of this paper is the existence, for every
bounded M\"obius harmonic function $\lambda:\M\to\RR$, of a unique
essentially bounded function $\varphi\in L^\infty(\B\M)$ on the
boundary, such that formula~(\ref{eq:6}) holds for the
pair~$(\lambda,\varphi)$.

Our technique of proof resembles to some extent the usual technique,
for trees for instance. However, starting from a bounded M\"obius
harmonic function $\lambda:\M\to\RR$, obtaining the martingale
$(Y_n,\FFF_n)_{n\geq1}$ which converges \pas\ towards the adequate
function $\varphi\in L^\infty(\B\M)$ is more involved than usual. In
order to put in motion the martingale machinery, we rely on a
generalization of the M\"obius transform, as popularized by
G.-C.{}~Rota~\cite{rota64}. The original M\"obius inversion formula,
first formulated for integers, was shown by Rota to be a particular
case of a formula best formulated in the incidence algebra associated
to a general class of partial orders. For a trace monoid, the M\"obius
inversion formula writes as follows, for any function $F:\C\to\RR$
defined on the set of cliques of the graph~$(\Sigma,I)$:
\begin{equation*}
  \forall c\in\C\qquad
\begin{aligned}[t]
 F(c)&=\sum_{c\in\C\tq c'\geq c}H(c')\,,&
H(c)&=\sum_{c'\in\C\tq c'\geq c}
(-1)^{|c'|-|c|}F(c')\,. 
\end{aligned}
\end{equation*}

The extended M\"obius inversion formula that we prove in this paper
holds for functions $F:\M\to\RR$ defined on $\M$ rather than on~$\C$
only. We show that it is an adequate tool to obtain the integral
representation formula for bounded M\"obius harmonic functions.

The extended M\"obius transform on which the new inversion formula is
based, makes use of the natural graded structure attached to elements
of a trace monoid. Indeed, traces can be put in a normal form---the
Cartier-Foata normal form. The graded structure of the trace monoid
$\M$ is the partition of $\M$ into traces with a fixed number of
elements in their Cartier-Foata normal form. This number is called the
height of a trace. Observe that the height does not correspond to the
geodesic distance between an element of the monoid and the identity of
the monoid.

Without any doubt, the graded M\"obius inversion formula that we state
should be valid for more general ``graded partial orders'', of which
braid monoids should typically be an instance. We felt however that
the trace monoid case was already non trivial, yet it allows for a
thorough presentation of the main ideas.

\paragraph*{Organization of the paper.}
\label{sec:organization-paper}

In Section~\ref{sec:comb-trace-mono}, we recall elements on the
Combinatorics of trace monoids, following~\cite{cartier69}, and we
provide essential information on the boundary of trace monoids and on
Bernoulli measures, following~\cite{abbesmair14}. The contributions of
the paper appear in
Sections~\ref{sec:prel-comb-results}--\ref{sec:non-negative-mobius}.
In Section~\ref{sec:prel-comb-results}, we introduce the graded
M\"obius transform and we prove the associated inversion
formula. Bounded M\"obius harmonic functions are the topic of
Section~\ref{sec:mobi-harm-funct}.
Section~\ref{sec:non-negative-mobius} deals with examples of
non-negative and unbounded M\"obius harmonic functions and introduces
the analogous of the Green and the Martin kernel. Finally,
Section~\ref{sec:conclusion} concludes the paper.


\section{Trace Monoids and Bernoulli Measures on their Boundary}
\label{sec:comb-trace-mono}

This section collects material on trace monoids, introduces the
boundary of a trace monoid and associated Bernoulli measures.

\paragraph*{Trace monoids.}
\label{sec:trace-monoids}

Let $\Sigma$ be a finite set, referred to as to the
\emph{alphabet}. Elements of $\Sigma$ are called \emph{letters}. By
convention, we only consider throughout the paper alphabets of
cardinality~$>1$.  An \emph{independence relation} $I$ on $\Sigma$ is
a binary relation on~$\Sigma$, symmetric and irreflexive. Let
$\Sigma^*$ be the free monoid on~$\Sigma$, and let $\R$ be the
congruence relation on $\Sigma^*$ generated by $\{(ab,ba)\tq (a,b)\in
I\}$. The quotient monoid $\M(\Sigma,I)=\Sigma^*/\R$ is called a
\emph{trace monoid}, and its elements are called
\emph{traces}. Classical references on trace monoids
are~\cite{cartier69,viennot86,diekert90}.

The \emph{dependence relation} associated to an independence relation
$I$ on $\Sigma$ is defined by $D=(\Sigma\times\Sigma)\setminus I$.
The trace monoid $\M(\Sigma,I)$ is said to be \emph{irreducible}
whenever the dependence relation $D$ makes the graph $(\Sigma,D)$
connected.

Put $\M=\M(\Sigma,I)$. For any trace $u\in\M$, any two representative
words of $u$ have the same length, which defines the \emph{length}
$|u|$ of~$u$. Let $0$ denote the empty trace, image in $\M$ of the
empty word, and let $\cdot$ denote the concatenation of traces. The
\emph{prefix relation} on~$\M$, denoted~$\leq$, is defined by:
\begin{equation*}
  \forall u,v\in\M\quad u\leq v\iff \exists w\in\M\quad v=u\cdot w.
\end{equation*}
This is a partial order relation on~$\M$. 

Trace monoids are known to be right and left cancellative, meaning:
\begin{equation}
  \label{eq:36}
  \forall x,x'\in\M\quad
\forall y,z\in\M\quad
y\cdot x\cdot z=y\cdot x'\cdot z\implies x=x'.
\end{equation}

\paragraph{Boundary and elementary cylinders. Bernoulli measures.}
\label{sec:boundary}

Let $\H$ denote the set of non-decreasing sequences $(x_n)_{n\geq0}$
in~$\M$, those sequences such that $x_n\leq x_{n+1}$ for all integers
$n\geq0$. We identify any two sequences $x=(x_n)_{n\geq0}$ and
$y=(y_n)_{n\geq0}$ in $\H$ such that $x\preccurlyeq y$ and
$y\preccurlyeq x$, where we have defined relation $\preccurlyeq$ as
follows:
\begin{equation*}
  \forall x,y\in\H\quad
x\preccurlyeq y\iff\forall n\geq0\quad\exists m\geq0\quad x_n\leq y_m\,.
\end{equation*}

Let $\Mbar$ denote the quotient set $\H/\equiv$, with $x\equiv y\iff
x\preccurlyeq y\wedge y\preccurlyeq x$\,. Then $\Mbar$ is just the
collapse of the pre-ordered set $(\H,\preccurlyeq)$, and as such, is
is equipped with a partial ordering relation~$\leq$. There is a
canonical injection $\M\to\Mbar$ which respects the ordering, and
which maps each trace $u\in\M$ to the image in $\Mbar$ of the constant
sequence $(u,u,\cdots)$. This justifies that elements of $\Mbar$ are
called \emph{generalized traces}.  Elements of
$\B\M=\Mbar\setminus\M$\,, identifying $\M$ with its image
in~$\Mbar$\,, are called \emph{infinite traces}. The set $\B\M$ is
called the \emph{boundary} of~$\M$~\cite{abbes08,abbesmair14}. By
construction, any sequence $(x_n)_{n\geq1}$ of elements of $\Mbar$
which is non decreasing has a least upper bound in~$\Mbar$\,,
denoted $\bigvee_{n\geq1}x_n$\,.

For each trace $u\in \M$, the \emph{elementary cylinder of base $u$}
is the following subset of~$\B\M$:
\begin{equation}
  \label{eq:7}
  \up u=\{\xi\in\B\M\tq u\leq \xi\}\,.
\end{equation}

We denote by $\FFF$ the \slgb\ on $\B\M$ generated by the countable
collection of elementary cylinders. We say that a probability measure
$\pr$ on $(\B\M,\FFF)$ is a \emph{Bernoulli
  measure}~\cite{abbesmair14} if $\pr(\up u)>0$ for all $u\in\M$, and
if:
\begin{equation}
  \label{eq:8}
  \forall u,v\in\M\quad \pr\bigl(\up(u\cdot v)\bigr)=\pr(\up u)\pr(\up v)\,.
\end{equation}

\paragraph{Cliques and Cartier-Foata decomposition.}
\label{sec:cliq-cart-foata}

An \emph{independence clique}, or a \emph{clique} for short, of a pair
$(\Sigma,I)$, is defined as a subset $c\subseteq\Sigma$ of the
alphabet, such that any two distinct letters $a,b\in c$ satisfy
$(a,b)\in I$. If $c=\{a_1,\ldots,a_n\}$ is a clique, then the product
$a_1\cdot\ldots\cdot a_n\in\M$ is independent of the chosen
enumeration of~$c$. Therefore cliques identify with their images
in~$\M$. The order of cliques in $\M$ corresponds to the inclusion
ordering on subsets of~$\Sigma$. We denote by $\C$ the set of cliques
associated to~$\M$, and by $\Cstar=\C\setminus\{0\}$ the set of non
empty cliques.

Let $c,c'$ be two cliques. We say that $(c,c')$ is
\emph{Cartier-Foata} admissible, denoted by $c\to c'$\,, if for every
letter $b\in c'$, there exists a letter $a\in c$ such that
$(a,b)\notin I$. In the heap of pieces interpretation of
Viennot~\cite{viennot86}, this corresponds to the letter $b$ being
blocked from below by the letter~$a$. It is known that for every non
empty trace $u\in\M$, there exists a unique integer $n\geq1$ and a
unique sequence of non empty cliques $(c_i)_{1\leq i\leq n}$ such
that:
\begin{align*}
  \forall i\in\{1,\ldots,n-1\}\quad c_i&\to c_{i+1}\,,&
u&=c_1\cdot\ldots\cdot c_n\,.
\end{align*}

This sequence $(c_i)_{1\leq i\leq n}$ is called the
\emph{Cartier-Foata decomposition} of~$u$~\cite{cartier69}. The
integer $n$ is called the \emph{height} of~$u$, we denote it by
$n=\height(u)$.

The Cartier-Foata decomposition extends to infinite traces: for every
infinite trace $\xi\in\B\M$, there exists a unique infinite sequence
$(c_i)_{i\geq1}$ of non empty cliques~\cite[Lemma~8.4]{abbesmair14} such that:
\begin{align*}
  \forall i\geq1\quad c_i&\to c_{i+1}\,,&
\xi&=\bigvee_{n\geq1}(c_1\cdot\ldots\cdot c_n)\,.
\end{align*}
The infinite sequence $(c_i)_{i\geq1}$ is called the \emph{Cartier-Foata
decomposition} of~$\xi$.

\paragraph{M\"obius transform. M\"obius
  polynomial. Characterization of Bernoulli measures.}
\label{sec:mobi-transf-mobi}

The M\"obius polynomial of $\M$ is $\mu_\M(X)\in\ZZ[X]$ defined by:
\begin{equation}
  \label{eq:11}
  \mu_\M(X)=\sum_{c\in\C}(-1)^{|c|}X^{|c|}\,.
\end{equation}
It is also referred to in the literature as to the clique polynomial
of~$(\Sigma,I)$, and coincides up to a change of variable with the
independence polynomial~\cite{levit05} of the graph $(\Sigma,D')$,
where $D'=\bigl((\Sigma\times\Sigma)\setminus
I\bigr)\setminus\{(x,x)\tq x\in\Sigma\}$\,.

It is known that $\mu_\M(X)$ has a unique root $p_0$ of smallest
modulus, and that
$p_0\in(0,1)$~\cite{goldwurm00,krob03,csikvari13}. If $\M$ is
irreducible, then~\cite[Th.~5.1]{abbesmair14} there is a unique
Bernoulli measure $\pr$ on $(\B\M,\FFF)$ such that $\pr(\up
u)=p_0^{|u|}$ for all $u\in\M$\,.

More generally, let $f:\M\to\RR$ be a function.  We say that $f$ is a
\emph{valuation} if $f(u\cdot v)=f(u)f(v)$ holds for all $u,v\in\M$.
Let $f$ be a positive valuation on~$\M$, and assume that $\M$ is
irreducible.  Let $h:\C\to\RR$ be the \emph{M\"obius transform} of the
restriction~$f|_\C$\,, defined by:
\begin{equation}
  \label{eq:10}
  \forall c\in\C\quad h(c)=\sum_{c'\in\C\tq c'\geq c}(-1)^{|c'|-|c|}f(c')\,.
\end{equation}

Then $f(u)=\pr(\up u)$ for some Bernoulli measure $\pr$ if and only if
the following two conditions \cite[Th.~3.3]{abbesmair14} are satisfied:
\begin{align}
  \label{eq:12}
  h(0)&=0\,,&
\forall c\in\Cstar\quad h(c)&>0\,.
\end{align}
The conditions in~(\ref{eq:12}) consist in a polynomial equality, and
several polynomial inequalities, involving only a finite number of
parameters, namely the numbers $f(a)$ for $a$ ranging over~$\Sigma$,
and that characterize the valuation~$f$.

Note that the M\"obius transform $h:\C\to\RR$ defined in~(\ref{eq:10})
makes sense for any function $f:\C\to\M$, and not only for the
restriction to $\C$ of a valuation defined on~$\M$.

Let $\pr$ be a Bernoulli measure on~$\B\M$, and let $f(u)=\pr(\up u)$
be the associated valuation. The sequence of non empty cliques
$\bigl(C_i(\xi)\bigr)_{i\geq1}$ which appear in the Cartier-Foata
decomposition of an infinite trace $\xi\in\B\M$, forms a sequence of
random variables. We know~\cite[Th.~4.1]{abbesmair14} that, under the
measure~$\pr$, the sequence $(C_i)_{i\geq1}$ is a time-homogeneous
Markov chain, which satisfies the following property, for every finite
sequence of non empty cliques $c_1\to\ldots\to c_n$:
\begin{equation}
  \label{eq:13}
  \pr(C_1=c_1,\ldots,C_n=c_n)=f(c_1)\cdots f(c_{n-1})h(c_n)\,.
\end{equation}

The law of~$C_1$, which is the initial distribution of the chain,
coincides with the restriction~$h|_{\Cstar}$\,. The transition matrix
$P=(P_{c,c'})_{(c,c')\in\Cstar\times\Cstar}$ is given by:
\begin{align}
\label{eq:26}
  P_{c,c'}&=
  \begin{cases}
    0,&\text{if $\neg(c\to c')$}\,,\\
h(c')/g(c),&\text{if $c\to c'$},
  \end{cases}
& g(c)=\sum_{c'\in\Cstar\tq c\to c'}h(c')\,.
\end{align}

Furthermore, as a consequence of the assumption $h(0)=0$ stated
in~(\ref{eq:12}), one has~\cite[Prop.~10.3]{abbesmair14}:
\begin{equation}
  \label{eq:30}
  \forall c\in\C\quad h(c)=f(c)g(c)\,.
\end{equation}

\paragraph{Ordering and Cartier-Foata decomposition.}
\label{sec:order-cart-foata}

We recall some results related to the Cartier-Foata decomposition of
traces and of infinite traces.

We still denote by $C_n(\xi)$ the $n^{\text{th}}$~clique in the
Cartier-Foata decomposition of an infinite trace $\xi\in\B\M$,
dropping the dependency with respect to $\xi$ when seeing $C_n$ as a
random variable defined on~$\B\M$. Let $u\in\M$ be a trace, of
Cartier-Foata decomposition $c_1\to\ldots\to c_n$\,. Then one has
\cite[Prop.~8.5]{abbesmair14} the
following equalities of subsets of~$\B\M$, putting
$v=c_1\cdot\ldots\cdot c_{n-1}$\,:
\begin{gather}
\label{eq:14}
\up u=\{\xi\in\B\M\tq C_1\cdot\ldots\cdot C_n\geq u\}\\
\label{eq:15}
\{\xi\in\B\M\tq C_1=c_1,\ldots,C_n=c_n\}=\up
u\setminus\Bigl(\bigcup_{c\in\C\tq c>c_n}\up (v\cdot c)\Bigr)
\end{gather}

Finally, define two cliques $c$ and $c'$ to be \emph{parallel}
whenever $c\times c'\subseteq I$, denoted by $c\indep c'$\,. If
$u,v\in\M$ are two traces, with $u=c_1\to\ldots\to c_n$ and
$v=d_1\to\ldots \to d_p$ their Cartier-Foata decompositions, then
\cite[Lemma~8.1]{abbesmair14} $u\leq v$ if and only if $n\leq p$, and
there exists cliques $\gamma_1,\ldots,\gamma_n$ such that:
\begin{align}
  \label{eq:20}
& \text{$d_i=c_i\cdot\gamma_i$ for $i\in\{1,\ldots,n\}$; and}\\
\label{eq:21}
&\text{$\gamma_i\indep c_j$ for all $i,j\in\{1,\ldots,n\}$ with $i\leq j$\,.}
\end{align}
The sequence of cliques $(\gamma_i)_{1\leq i\leq n}$ as above is
unique. An illustration is given in Figure~\ref{fig:sqpokqlmq} in next
section.

\section{The Graded M\"obius Transform}
\label{sec:prel-comb-results}

The M\"obius inversion formula, which holds for general classes of
partial orders~\cite{rota64}, takes the following form for trace
monoids: for any function $f:\C\to\RR$, with M\"obius transform
$h:\C\to\RR$ defined as in~(\ref{eq:10}), the function $f$ can be
retrieved from its transform through the formula
(see~\cite[Prop.~10.1]{abbesmair14} for a justification):
\begin{equation}
  \label{eq:17}
  \forall c\in\C\quad
f(c)=\sum_{c'\in\C\tq c'\geq c}h(c')\,.
\end{equation}

In this section, we give a generalization of~(\ref{eq:17}). For this,
we introduce the graded M\"obius transform of functions with
domain~$\M$, instead of $\C$ only. The graded M\"obius transform uses
the partition of $\M$ according to the height of traces.

The probabilistic interpretation of the corresponding inversion
formula will be the topic of next section.

\begin{definition}
  \label{def:1}
Let $\M$ be a trace monoid, and let $F:\M\to\RR$ be a function. The
\emph{graded M\"obius transform} of $F$ is the function
$H:\M\to\RR$ defined as follows. For $u\in\M$ a generic non empty
trace, denote by $c$ the last clique of the  Cartier-Foata decomposition
of~$u$. Let also $v$ be the unique trace such that $u=v\cdot c$. Then
define $H(u)$ by:
\begin{equation}
  \label{eq:9}
  H(u)=\sum_{c'\in\C\tq c'\geq c}(-1)^{|c'|-|c|}F(v\cdot c')\,.
\end{equation}
Define also $H(0)=\sum_{c\in\C}(-1)^{|c|}F(c)$\,.
\end{definition}

How to retrieve $F$ from its graded M\"obius transform $H$ is stated
in next result. Recall that the height $\height(u)$ of a trace
$u\in\M$ is the number of cliques in its Cartier-Foata decomposition,
with the convention $\tau(0)=0$. For each trace $u\in\M$, we put:
\begin{align*}
\text{for $u\neq0$\,:}\quad\M(u)&=\{x\in\M\tq \tau(x)=\tau(u)\wedge u\leq x\}\,,\\
\M(0)&=\C\,.
\end{align*}

See an illustration in Figure~\ref{fig:sqpokqlmq}.
\def\piece#1{\save"G";"G"+(24,0)**@{-};"G"+(24,12)**@{-};"G"+(0,12)**@{-};"G"**@{-},\restore}
\begin{figure}
\centerline{\xy
<.1em,0em>:
(0,0)="G",
"G"+(12,6)*{\gamma_1},
"G";"G"+(24,0)**@{--};"G"+(24,12)**@{--};"G"+(0,12)**@{--};"G"**@{--},
(28,0)="G",
"G";"G"+(36,6)*{c_1},
"G";"G"+(72,0)**@{-};"G"+(72,12)**@{-};"G"+(0,12)**@{-};"G"**@{-},
(-4,-2);(104,-2)**@{-},
(12,14)="G",
"G"+(12,6)*{\gamma_2},
"G";"G"+(24,0)**@{--};"G"+(24,12)**@{--};"G"+(0,12)**@{--};"G"**@{--},
(40,14)="G",
"G";"G"+(24,6)*{c_2},
"G";"G"+(48,0)**@{-};"G"+(48,12)**@{-};"G"+(0,12)**@{-};"G"**@{-},
(30,40)*{\vdots},(72,40)*{\vdots},
(30,52)="G",
"G"+(12,6)*{\gamma_n},
"G";"G"+(24,0)**@{--};"G"+(24,12)**@{--};"G"+(0,12)**@{--};"G"**@{--},
(58,52)="G",
"G";"G"+(15,6)*{c_n},
"G";"G"+(30,0)**@{-};"G"+(30,12)**@{-};"G"+(0,12)**@{-};"G"**@{-},
\endxy}
\caption{\usebox{\mycaptionbox}}
  \label{fig:sqpokqlmq}
\end{figure}

\begin{theorem}
  \label{thr:1}
Let $F:\M\to\RR$ be a function, and let $H:\M\to\RR$ be the graded
M\"obius transform of~$F$. Then:
\begin{equation}
  \label{eq:19}
  \forall u\in\M\quad 
F(u)=\sum_{x\in\M(u)}H(x)\,.
\end{equation}
\end{theorem}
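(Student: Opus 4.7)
The plan is an iterative reduction of the double sum $\sum_{x \in \M(u)} H(x)$ using the elementary identity $\sum_{A \subseteq S}(-1)^{|A|} = 0$ for $S \neq \emptyset$, combined with commutations in the trace monoid that push an auxiliary clique from the last Cartier-Foata position back to the first. The case $u = 0$ is disposed of directly: expanding $\sum_{c \in \C} H(c) = H(0) + \sum_{c \in \Cstar} H(c)$ and exchanging summation, each $F(c')$ occurs with coefficient $\sum_{c \in \C,\, c \leq c'} (-1)^{|c'|-|c|}$, which by the basic identity vanishes unless $c' = 0$, leaving exactly $F(0)$.

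For $u$ with Cartier-Foata decomposition $c_1 \to \cdots \to c_n$, $n \geq 1$, I would first parametrize $\M(u)$ via~\eref{eq:20}--\eref{eq:21}: each $x \in \M(u)$ has Cartier-Foata decomposition $d_1 \to \cdots \to d_n$ with $d_i = c_i \cdot \gamma_i$, where the tuple $(\gamma_1, \ldots, \gamma_n)$ satisfies $\gamma_i \indep c_j$ for $i \leq j$ and the Cartier-Foata admissibility $d_{i-1} \to d_i$ for $i \geq 2$. Expanding $H(x)$ from Definition~\ref{def:1} and writing $c' \geq d_n$ as $c' = c_n \cdot \gamma'_n$ with $\gamma'_n \supseteq \gamma_n$ a clique independent from $c_n$, one obtains a triple sum with summand $(-1)^{|\gamma'_n|-|\gamma_n|}\, F(c_1\gamma_1\cdots c_{n-1}\gamma_{n-1}\cdot c_n\gamma'_n)$. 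Swapping the two inner sums so that $\gamma'_n$ is fixed and $\gamma_n \subseteq \gamma'_n$ varies, the admissibility of $\gamma_n$ constrains it to lie in the set $B \subseteq \gamma'_n$ of letters \emph{blocked} by $c_{n-1}\gamma_{n-1}$; the alternating sum over $\gamma_n \subseteq B$ vanishes unless $B = \emptyset$, which forces $\gamma'_n \indep c_{n-1}\gamma_{n-1}$.

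At this point I would exploit the trace commutations: since $\gamma'_n$ is independent from and disjoint from all of $c_{n-1}, \gamma_{n-1}, c_n$, the equality $c_{n-1}\gamma_{n-1}\cdot c_n \gamma'_n = c_{n-1}(\gamma_{n-1} \cup \gamma'_n)\cdot c_n$ holds in $\M$. This absorbs $\gamma'_n$ into Cartier-Foata position $n-1$, and the remaining sum has the \emph{same shape} as the original one, only with position $n$ now carrying the frozen $c_n$ and an auxiliary clique $\delta$ placed at position $n-1$. I would then iterate the exact same reduction: substitute $\tilde\gamma_{n-1} = \gamma_{n-1} \cup \delta$ as the new free variable and sum over $\gamma_{n-1} \subseteq \tilde\gamma_{n-1}$; the inclusion--exclusion collapse forces $\tilde\gamma_{n-1} \indep c_{n-2}\gamma_{n-2}$, after which a further commutation moves the auxiliary clique to position $n-2$.

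After $n-1$ such rounds the sum reduces to $\sum_{\gamma_1}\sum_{\gamma'_1 \supseteq \gamma_1}(-1)^{|\gamma'_1|-|\gamma_1|} F(c_1 \gamma'_1\cdot c_2\cdots c_n)$, where the admissibility of $\gamma_1$ imposes no blocking condition (there is no previous clique), so \emph{every} $\gamma_1 \subseteq \gamma'_1$ contributes; the alternating sum then forces $\gamma'_1 = \emptyset$, leaving exactly $F(c_1 \cdots c_n) = F(u)$. The main obstacle I expect is not any deep idea but the careful bookkeeping at each round: one must check that the newly absorbed clique satisfies the precise parallelism and disjointness relations needed both for the commutation identity in $\M$ and for the next application of the inclusion--exclusion collapse, and that the degenerate boundary of the iteration (no previous clique) is exactly what triggers $\gamma'_1 = \emptyset$ and not some spurious condition.
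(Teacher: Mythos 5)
Your proposal is correct and follows essentially the same route as the paper's proof: parametrize $\M(u)$ by the tuples $(\gamma_1,\ldots,\gamma_n)$ from~\eqref{eq:20}--\eqref{eq:21}, merge the innermost clique with the auxiliary one, collapse the alternating sum to an independence (non-blocking) condition, commute the surviving clique back one Cartier-Foata position, and iterate down to position~$1$ where the unconstrained alternating sum kills everything but the trivial term. The paper packages the collapse as the indicator $K(\gamma)=\un{\gamma\indep c_{n-1}\cdot\gamma_{n-1}}$ via the binomial formula, which is exactly your ``blocked set $B=\emptyset$'' argument.
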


\begin{remark}
  \begin{enumerate}
  \item If $\tau(u)=1$, then $u=c$ is a non empty clique. Hence $H(u)$
    coincides with the value $h(u)$, where $h:\C\to\RR$ is the
    M\"obius transform of the
    restriction~$F|_\C$\,. Formula~(\ref{eq:19}) writes as:
    $\sum_{c'\in\C\tq c'\geq c}h(c')=F(c)$, which is the standard
    M\"obius inversion formula~(\ref{eq:17}) for~$F|_\C$\,.
  \item Both the definition of the graded M\"obius transform and the
    inversion formula~(\ref{eq:19}) are valid for functions taking
    values in any commutative group instead of~$\RR$.
  \end{enumerate}
\end{remark}

\begin{proof}[Proof of Theorem~\ref{thr:1}.]
  We first give an alternative formulation for the graded M\"obius
  transform of~$F$, still denoting by $c$ the last clique in the
  Cartier-Foata decomposition of~$u$:
\begin{equation}
  \label{eq:29}
  H(u)=\sum_{\delta\in\C\tq\delta\indep c}(-1)^{|\delta|}F(u\cdot\delta)\,,
\end{equation}
resulting from the change of variable $c'=c\cdot\delta$
in~(\ref{eq:9}).

We now come to the proof of the identity~(\ref{eq:19}). If $u=0$, then
the identity follows from the standard M\"obius inversion
formula~(\ref{eq:17}). 

Hence, let $u\in\M$ be a non empty trace, and
let $c_1\to\ldots\to c_n$ be the Cartier-Foata decomposition of~$u$.
According to the results recalled in~\S~\ref{sec:order-cart-foata}
in~(\ref{eq:20})--(\ref{eq:21}), the Cartier-Foata decomposition of a
generic $x\in\M(u)$ is of the form $d_1\to\ldots\to d_n$ with
$d_i=c_i\cdot\gamma_i$\,, where $(\gamma_1,\ldots,\gamma_n)$ is a
sequence of cliques uniquely determined by~$x$, and such that
  \begin{inparaenum}[(1)] 
    \item $\gamma_i\indep c_i,\ldots,c_n$ for all
      $i\in\{1,\ldots,n\}$, and
    \item $c_1\cdot\gamma_1\to\ldots\to c_n\cdot\gamma_n$ holds.
  \end{inparaenum}
  Consequently, using~(\ref{eq:29}) above, the computation goes as
  follows:
\begin{align}
\notag
  \sum_{x\in
    \M(u)}H(x)&=\sum_{x\in\M(u)}\ \sum_{\delta\in\C\tq\delta\indep
    c_n\cdot\gamma_n}(-1)^{|\delta|}F(x\cdot\delta)\\
\label{eq:31}
&=\sum_{\substack{
\gamma_1,\ldots,\gamma_{n-1}\in\C\;:\\
\gamma_i\indep c_i,\ldots,c_n\text{  for }1\leq i\leq n-1\\
c_1\cdot\gamma_1\to\ldots\to c_{n-1}\cdot\gamma_{n-1}
}}\quad R(\gamma_1,\ldots,\gamma_{n-1})
\\
\intertext{with}
\notag
R(\gamma_1,\ldots,\gamma_{n-1})&=\sum_{\substack{\gamma_n\in\C\tq \gamma_n\indep c_n\,,\\
c_{n-1}\cdot \gamma_{n-1}\to c_n\cdot\gamma_n
}}
\sum_{\delta\in\C\tq\delta\indep
  c_n\cdot\gamma_n}(-1)^{|\delta|}F(c_1\cdot\gamma_1\cdot\ldots\cdot
c_n\cdot\gamma_n\cdot\delta)
\end{align}

The range of the cliques $\gamma_n\in\C$ in the scope of the above sum
is identical to $\gamma_n\indep c_n$ and $c_{n-1}\cdot\gamma_{n-1}\to
\gamma_n$\,, since $c_{n-1}\to c_n$ holds by hypothesis. Using the
change of variable $\gamma=\gamma_n\cdot\delta$ yields:
\begin{align*}
  R(\gamma_1,\ldots,\gamma_{n-1})&=
\sum_{\substack{\gamma_n\in\C\tq \gamma_n\indep c_n\,,\\
c_{n-1}\cdot\gamma_{n-1}\to\gamma_n}}\ 
\sum_{\substack{
\gamma\in\C\tq\gamma\geq\gamma_n\\
\gamma\indep c_n
}}(-1)^{|\gamma|-|\gamma_n|}
 F(c_1\cdot\gamma_1\cdot\ldots\cdot
c_{n-1}\cdot\gamma_{n-1}\cdot c_n\cdot\gamma)\\
&=\sum_{\gamma\in\C\tq\gamma\indep c_n}(-1)^{|\gamma|}
F(c_1\cdot\gamma_1\cdot\ldots\cdot c_{n-1}\cdot\gamma_{n-1}\cdot
c_n\cdot\gamma)K(\gamma)\\
\intertext{with}
K(\gamma)&=\sum_{\substack{\gamma_n\in\C\tq\gamma_n\indep c_n\\
c_{n-1}\cdot\gamma_{n-1}\to\gamma_n\\
\gamma_n\leq\gamma
}}
(-1)^{|\gamma_n|}=\un{\gamma\indep c_{n-1}\cdot\gamma_{n-1}}\quad\text{by the binomial formula.}
\end{align*}

Returning to~(\ref{eq:31}), we obtain thus:
\begin{gather*}
  \sum_{x\in\M(u)}H(x)=\sum_{\substack{\gamma_1,\ldots,\gamma_{n-1}\in\C\;:\\
\gamma_i\indep c_i,\ldots,c_n\text{ for }1\leq i\leq n\\
c_1\cdot\gamma_1\to\ldots\to c_{n-1}\cdot\gamma_{n-1}}}\quad
\sum_{\substack{\delta\in\C\;:\\
\delta\indep c_{n-1}\cdot\gamma_{n-1}\,,\,c_n
}}
(-1)^{|\delta|}F(c_1\cdot\gamma_1\cdot\ldots\cdot
c_{n-1}\cdot\gamma_{n-1}\cdot c_n\cdot\delta)
\end{gather*}

Applying recursively the same transformation eventually yields:
\begin{align*}
  \sum_{x\in\M(u)}H(x)&=
\sum_{\substack{\gamma_1\in\C\;:\\
\gamma_1\indep c_1,\ldots,c_n
}}\quad
\sum_{\substack{
\delta\in\C\;:\\
\delta\indep c_1\cdot\gamma_1,c_2,\ldots,c_n
}}
(-1)^{|\delta|}F(c_1\cdot\gamma_1\cdot c_2\cdot\ldots\cdot
c_n\cdot\delta)\\
&=\sum_{\substack{\delta\in\C\;:\\
\delta\indep c_1,\ldots,c_n}}(-1)^{|\delta|}F(c_1\cdot\delta\cdot
c_2\cdot\ldots\cdot c_n)
\biggl(\sum_{\substack{
\gamma\in\C\;:\\
\gamma\indep c_1,\ldots,c_n\\
\gamma\leq\delta
}}
(-1)^{|\gamma|}
\biggr)\\
&=F(c_1\cdot\ldots\cdot c_n)=F(u)\,.
\end{align*}
The proof is complete.
\end{proof}

\section{M\"obius Harmonic Functions and their Integral Representation}
\label{sec:mobi-harm-funct}

We introduce notations that will be used throughout this section and
the next one. We assume that $\M=\M(\Sigma,I)$ is an irreducible trace
monoid, equipped with a Bernoulli measure $\pr$ on $(\B\M,\FFF)$. We
define the functions $f,h:\M\to\RR$ by letting $f(u)=\pr(\up u)$ for
$u\in\M$, and $h$ is the graded M\"obius transform of~$f$ (see
Definition~\ref{def:1}).

By definition of a Bernoulli measure, the function $f$ is
multiplicative over~$\M$\,: $f(u\cdot v)=f(u)f(v)$. Therefore, if
$u\in\M$ is such that $u=v\cdot c$, with $c$ the last clique in the
Cartier-Foata decomposition of~$u$, it follows from
Definition~\ref{def:1} that $h(u)=f(v)h(c)$. Hence, according
to~(\ref{eq:13}), if $c_1\to\ldots\to c_n$ are $n\geq1$ non empty
cliques and $u=c_1\cdot\ldots\cdot c_n$\,, one has:
\begin{equation}
  \label{eq:22}
  \pr(C_1=c_1,\ldots,C_n=c_n)=h(u).
\end{equation}

\begin{definition}
  \label{def:2}
A \emph{M\"obius harmonic} function, relative to a pair $(\M,\pr)$ as above,
is a function $\lambda:\M\to\RR$ such that:
\begin{equation}
  \label{eq:23}
  \forall u\in\M\quad
\sum_{c\in\C}(-1)^{|c|}f(c)\lambda(u\cdot c)=0\,.
\end{equation}
\end{definition}

Obviously, M\"obius harmonic functions form a real vector space.  The
first example of M\"obius harmonic functions are constant
functions. Indeed, if $\lambda=1$ identically on~$\M$, then
(\ref{eq:23})~reduces to:
\begin{equation*}
  \sum_{c\in\C}(-1)^{|c|}f(c)=0\,,
\end{equation*}
which holds since we recognize the M\"obius transform $h$ evaluated
at~$0$ in the above expression, and $h(0)=0$ by~(\ref{eq:12}).

Another way to obtain bounded M\"obius harmonic functions is given by
the next result. We denote by $L^\infty(\B\M)$ the space of functions
$\varphi:\B\M\to\RR$ bounded \mbox{$\pr$-modulo~$0$}.

\begin{proposition}
\label{prop:2}
  For every $\varphi\in L^\infty(\B\M)$, the function
  $\lambda:\M\to\RR$ defined by:
  \begin{equation}
    \label{eq:24}
    \forall u\in\M\quad \lambda(u)=\frac1{f(u)}\int_{\up u}\varphi(\xi)\,d\pr(\xi)
  \end{equation}
is M\"obius harmonic and bounded on~$\M$.
\end{proposition}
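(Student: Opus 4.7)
The proof splits into boundedness (immediate) and harmonicity (the real content).

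\textbf{Boundedness.} By the very definition~(\ref{eq:24}), one has $|\lambda(u)|\leq \|\varphi\|_\infty \pr(\up u)/f(u)=\|\varphi\|_\infty$, so $\lambda$ is bounded on $\M$ with sup-norm at most $\|\varphi\|_\infty$. This requires no work.

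\textbf{Harmonicity.} Substituting~(\ref{eq:24}) into the left-hand side of~(\ref{eq:23}) and using multiplicativity $f(u\cdot c)=f(u)f(c)$ (valid for any clique $c$, since $f$ is a valuation on~$\M$ and cliques are elements of~$\M$), the factor $f(c)$ cancels, leaving
\begin{equation*}
\sum_{c\in\C}(-1)^{|c|}f(c)\lambda(u\cdot c)
=\frac{1}{f(u)}\int_{\B\M}\varphi(\xi)\,S_u(\xi)\,d\pr(\xi),
\qquad
S_u(\xi)=\sum_{c\in\C}(-1)^{|c|}\un{u\leq \xi\text{ and }c\leq \xi/u}.
\end{equation*}
So everything reduces to proving that $S_u(\xi)=0$ for $\pr$-almost every~$\xi$. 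The interchange of finite sum and integral is trivial since $\C$ is finite.

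\textbf{Computation of $S_u$.} If $u\not\leq\xi$, then $u\cdot c\not\leq\xi$ for every clique~$c$ (as $u\leq u\cdot c$), so $S_u(\xi)=0$. Suppose now $u\leq \xi$. I will factor $\xi=u\cdot\eta$ with $\eta\in\B\M$ an infinite trace: this uses the results of~\S\ref{sec:order-cart-foata}, according to which the Cartier-Foata decomposition of $\xi$ has the shape $(c_i\cdot\gamma_i)_{i\geq 1}$, where $c_1\to\ldots\to c_n$ is the Cartier-Foata decomposition of $u$ (zero cliques beyond height $\height(u)$), so that $\eta$ is well-defined as the generalized trace obtained from the remaining $\gamma_i$'s and the tail $c_{n+1},c_{n+2},\ldots$. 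By left cancellation of $\M$ extended to $\Mbar$, the condition $u\cdot c\leq \xi=u\cdot\eta$ is equivalent to $c\leq \eta$. Since $c$ is a clique (height $\leq 1$), applying~(\ref{eq:20})--(\ref{eq:21}) to $c\leq\eta$ shows that $c\leq\eta$ is equivalent to $c\subseteq C_1(\eta)$, i.e.\ to $c$ being a subset of the first clique of~$\eta$. Hence
\begin{equation*}
S_u(\xi)=\sum_{c\subseteq C_1(\eta)}(-1)^{|c|}=(1-1)^{|C_1(\eta)|}=0,
\end{equation*}
because $\eta$ is an infinite trace, so $C_1(\eta)$ is a non-empty clique and the binomial formula applies.

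\textbf{Main obstacle and conclusion.} The only delicate point is the factorization $\xi=u\cdot\eta$ for infinite $\xi\in\up u$ and the translation of ``$c\leq\eta$'' into ``$c\subseteq C_1(\eta)$''; both rest on the parallel-cliques characterization~(\ref{eq:20})--(\ref{eq:21}) of the order relation via Cartier-Foata decompositions recalled in~\S\ref{sec:order-cart-foata}. Once $S_u\equiv 0$ is established, integrating against $\varphi\,d\pr$ yields the identity~(\ref{eq:23}), and combined with the boundedness bound above, the proposition follows.
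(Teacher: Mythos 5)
Your proof is correct and follows essentially the same route as the paper's: both reduce harmonicity to the identity $\sum_{c\in\C}(-1)^{|c|}\un{u\cdot c\leq\xi}=0$ for $\xi\in\up u$, which the paper obtains from Poincar\'e inclusion--exclusion applied to the covering $\up u=\bigcup_{a\in\Sigma}\up(u\cdot a)$ (nonempty intersections being indexed by cliques), and which you obtain pointwise from the binomial formula applied to the subsets of the first Cartier-Foata clique of $\xi/u$. The two derivations rest on the same facts (cliques index the coherent extensions of $u$ by letters, and every $\xi\in\up u$ admits at least one such extension), so this is the same argument in slightly different packaging.
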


\begin{proof}
  It is obvious that $\lambda$ thus defined is bounded on~$\M$
  by~$\|\varphi\|_\infty$\,. Let $u\in\M$ be a trace, and consider the
  following non disjoint union:
\begin{equation}
  \label{eq:32}
  \up u=\bigcup_{a\in\Sigma}\up(u\cdot a)=\bigcup_{i=1}^k\up(u\cdot a_i)\,,
\end{equation}
where $\Sigma=\{a_1,\ldots,a_k\}$ is an enumeration of~$\Sigma$.  We
decompose the integral in~(\ref{eq:24}) with respect to the
union~(\ref{eq:32}), and using Poincar\'e inclusion-exclusion
principle:
\begin{align*}
  f(u)\lambda(u)&=\sum_{r=1}^k(-1)^{r+1}\sum_{1\leq i_1<\cdots<i_r\leq
  k}\int_{\up(u\cdot a_{i_1})\cap\cdots\cap \up(u\cdot a_{i_r})}\varphi\,d\pr\,.
\end{align*}

An intersection $\up(u\cdot a_{i_1})\cap\cdots\cap\up(u\cdot a_{i_r})$
is empty unless $\{a_{i_1},\ldots,a_{i_r}\}$ is a clique, in which
case the intersection coincides with $\up(u\cdot a_{i_1}\cdot\ldots\cdot
a_{i_r})$. Henceforth the above sum evaluates as:
\begin{align*}
  f(u)\lambda(u)&=\sum_{c\in\Cstar}(-1)^{|c|+1}\int_{\up(u\cdot
    c)}\varphi\,d\pr\,,&
\text{with }\Cstar&=\C\setminus\{0\}\,.
\end{align*}

Introducing $f(u\cdot c)$ in the above sum in order to recognize
$\lambda(u\cdot c)$ yields:
\begin{align*}
  f(u)\lambda(u)+\sum_{c\in\Cstar}(-1)^{|c|}f(u\cdot c)\lambda(u\cdot c)=0\,.
\end{align*}
Since $f$ is multiplicative and positive on~$\M$, we simplify by
$f(u)$, and recognize in $\lambda(u)$ the missing term of the sum for
$c=0$, yielding:
\begin{align*}
  \sum_{c\in\C}(-1)^{|c|}f(c)\lambda(u\cdot c)=0\,,
\end{align*}
which was to be proved.
\end{proof}

Our goal is now to prove a converse for Proposition~\ref{prop:2};
hence, starting from a bounded M\"obius harmonic function
$\lambda:\M\to\RR$, to find $\varphi\in L^\infty(\B\M)$ such
that~(\ref{eq:24}) holds. The remaining of the section is devoted to
the proof of this result, stated in Theorem~\ref{thr:2} below.

From now on, we fix a bounded M\"obius harmonic function
$\lambda:\M\to\RR$. Our method of proof to find the adequate
$\varphi\in L^\infty(\B\M)$ follows loosely the same line of proof
than for harmonic functions on trees for
instance~\cite{mouton00}. However, the issue here is to find an
adequate martingale, the expression of which is not obvious \textit{a
  priori}.

Reasoning by analysis, assume first that $\varphi$ exists. Recall that
we have defined in~\S~\ref{sec:comb-trace-mono} the sequence of random
variables~$(C_n)_{n\geq1}$\,, given by the non empty cliques of the
Cartier-Foata decomposition of a generic element $\xi\in\B\M$. For
each integer $n\geq1$, consider the sub-\slgb\ of $\FFF$ generated by $(C_1,\ldots,C_n)$:
\begin{equation*}
  \FFF_n=\sigma\langle
  C_1,\ldots,C_n\rangle\,.  
\end{equation*}

Obviously, $(\FFF_n)_{n\geq1}$ forms a filtration of~$\FFF$, and
therefore the sequence of conditional expectations
$\bigl(\esp(\varphi|\FFF_n),\FFF_n\bigr)_{n\geq1}$ is a
martingale. Putting $Y_n=\esp(\varphi|\FFF_n)$\,, the problem that we
face is to express $Y_n$ using $\lambda$ only. Since $Y_n$ is
$\FFF_n$-measurable, $Y_n$~can be seen as a function of the $n$ first
cliques $C_1,\ldots,C_n$\,.  Some computations, the details of which
will be given in the proof of Theorem~\ref{thr:2} below, lead to the
following potential form for~$Y_n$:
\begin{equation}
    \label{eq:25}
Y_n=\frac1{h(C_n)}\sum_{c\in\C\tq c\geq
  C_n}(-1)^{|c|-|C_n|}f(c)\lambda(V\cdot c)\,,\text{ with
}V=C_1\cdot\ldots\cdot C_{n-1}\,.
\end{equation}

Based on the above analysis, we are naturally brought to prove the
following result.

\begin{lemma}
\label{lem:1}
Let $\lambda:\M\to\RR$ be a bounded M\"obius harmonic function. For
each integer $n\geq1$, let $Y_n$ be the $\FFF_n$-measurable random
variable defined by~\eqref{eq:25}.  Then $(Y_n,\FFF_n)_{n\geq1}$ is a
bounded martingale.
\end{lemma}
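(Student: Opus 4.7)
The plan is to dispatch boundedness directly and then prove $\esp(Y_{n+1}\mid\FFF_n)=Y_n$ via a sum-swap, a binomial collapse, and one invocation of the M\"obius harmonic hypothesis. For boundedness, the finiteness of $\C$ and the strict positivity of $h$ on $\Cstar$ guaranteed by~\eref{eq:12} give $h_{\min}=\min_{c\in\Cstar}h(c)>0$ and $K=\max_{c\in\Cstar}\sum_{c'\geq c}f(c')<\infty$; if $|\lambda|\leq M$, formula~\eref{eq:25} yields $|Y_n|\leq MK/h_{\min}$ uniformly in~$n$. Before computing the conditional expectation, it is convenient to recast $Y_n$ by parametrizing each $c\geq C_n$ as $c=C_n\cdot\gamma$ with $\gamma\in\C$ and $\gamma\indep C_n$: then $|c|-|C_n|=|\gamma|$ and $f(c)=f(C_n)f(\gamma)$, and combining with $h(C_n)=f(C_n)g(C_n)$ from~\eref{eq:30} one obtains
\[
Y_n=\frac1{g(C_n)}\sum_{\gamma\in\C\tq\gamma\indep C_n}(-1)^{|\gamma|}f(\gamma)\lambda(U_n\cdot\gamma),\qquad U_n=V_n\cdot C_n.
\]

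Now condition on $\FFF_n$. By the Markov property and the transition formula~\eref{eq:26}, $\pr(C_{n+1}=c_{n+1}\mid\FFF_n)=h(c_{n+1})/g(C_n)$ when $C_n\to c_{n+1}$, and zero otherwise; substituting~\eref{eq:25} at level $n+1$ cancels the $h(c_{n+1})$ factor and gives
\[
\esp(Y_{n+1}\mid\FFF_n)=\frac1{g(C_n)}\sum_{\substack{c_{n+1}\in\Cstar\\C_n\to c_{n+1}}}\sum_{c\in\C\tq c\geq c_{n+1}}(-1)^{|c|-|c_{n+1}|}f(c)\lambda(U_n\cdot c).
\]
Set $A=\{b\in\Sigma\tq b\indep C_n\}$; then the relation $C_n\to c_{n+1}$ is equivalent to $c_{n+1}\cap A=\emptyset$, and since $c$ is a clique all of its subsets are cliques, so for fixed $c$ the admissible $c_{n+1}$ are exactly the nonempty subsets of $c\setminus A$. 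Using $(-1)^{|c|-|c_{n+1}|}=(-1)^{|c|}(-1)^{|c_{n+1}|}$ and the binomial identity $\sum_{\emptyset\neq T\subseteq c\setminus A}(-1)^{|T|}=-\un{c\not\indep C_n}$, the double sum collapses to
\[
\esp(Y_{n+1}\mid\FFF_n)=-\frac1{g(C_n)}\sum_{c\in\C\tq c\not\indep C_n}(-1)^{|c|}f(c)\lambda(U_n\cdot c).
\]

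At this last step the M\"obius harmonic identity~\eref{eq:23} at $u=U_n$, namely $\sum_{c\in\C}(-1)^{|c|}f(c)\lambda(U_n\cdot c)=0$, is invoked to rewrite the sum over $c\not\indep C_n$ as minus the sum over $c\indep C_n$; the right-hand side then becomes $\frac1{g(C_n)}\sum_{c\indep C_n}(-1)^{|c|}f(c)\lambda(U_n\cdot c)$, which is exactly the recast form of $Y_n$ above. The main obstacle is the careful handling of the swap: one must remember that $c_{n+1}$ ranges only over \emph{nonempty} cliques, which flips the naive binomial output $\un{c\indep C_n}$ into the complementary indicator $-\un{c\not\indep C_n}$, and it is precisely this sign flip that forces the harmonic hypothesis to intervene in order to convert one indicator into the other.
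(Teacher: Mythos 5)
Your proof is correct and follows essentially the same route as the paper's: the same Markov transition computation with the $h(c_{n+1})/g(C_n)$ cancellation, the same binomial collapse to $-\un{\neg(c\indep C_n)}$, the same invocation of harmonicity to flip the indicator, and the same use of $h(c)=f(c)g(c)$. The only differences are cosmetic (you recast $Y_n$ via the substitution $c=C_n\cdot\gamma$ at the outset rather than at the end, and you make the boundedness estimate explicit where the paper calls it obvious).
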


\begin{proof}
  It is obvious that $Y_n$ is $\FFF_n$-measurable, bounded and thus
  integrable. Hence we only have to show that
  $E(Y_n|\FFF_{n-1})=Y_{n-1}$ holds for all $n\geq2$\,. Putting
  $Z_n=E(Y_n|\FFF_{n-1})$, we evaluate $Z_n$ on the atom
  $\{C_1=c_1,\ldots,C_{n-1}=c_{n-1}\}$ with $c_1\to\ldots\to
  c_{n-1}$\,, by computing as follows, and putting
  $v=c_1\cdot\ldots\cdot c_{n-1}$\,:
  \begin{align*}
    Z_n&=\sum_{c\in\Cstar\tq c_{n-1}\to
      c}\pr(C_n=c|C_1=c_1,\ldots,C_{n-1}=c_{n-1})Y_n(c_1,\ldots,c_{n-1},c)\\
&=\sum_{c\in\Cstar\tq c_{n-1}\to c}
\frac{h(c)}{g(c_{n-1})}\frac1{h(c)}\sum_{c'\in\C\tq c'\geq
  c}(-1)^{|c'|-|c|}f(c')\lambda(v\cdot c')
\intertext{where $g$ is the normalization factor defined
  in~(\ref{eq:26}),}
&=\frac1{g(c_{n-1})}\sum_{c'\in\C}(-1)^{|c'|}f(c')\lambda(v\cdot
c')\underbrace{\sum_{c\in\Cstar\tq c\leq c'\wedge c_{n-1}\to c}(-1)^{|c|}}_{K(c',c_{n-1})}
  \end{align*}

  Using the binomial formula, we have for any two cliques $d$ and~$d'$:
  \begin{equation*}
    \sum_{\delta\in\C\tq\delta\leq d'\wedge d\to\delta}(-1)^{|\delta|}=
    \begin{cases}
1,&\begin{array}[t]{l}%
\text{if there is no $\delta\leq d'$ but $0$ such that
     $d\to\delta$}\\
\quad\iff d\indep d' 
\end{array}
\\
0,&\begin{array}[t]{l}\text{otherwise}
\end{array}
\end{cases}
  \end{equation*}
Hence:
 \begin{equation*}
   K(c',c_{n-1})=-1+\sum_{c\in\C\tq c\leq c'\wedge c_{n-1}\to
     c}(-1)^{|c|}=-\un{\neg(c'\indep c_{n-1})}\,.
 \end{equation*}
All put together, this yields:
\begin{equation}
  \label{eq:28}
  Z_n=\frac{-1}{g(c_{n-1})}\sum_{c\in\C\tq\neg(c\indep c_{n-1})}(-1)^{|c|}f(c)\lambda(v\cdot
  c)\,.
\end{equation}

 According to the M\"obius harmonicity of $\lambda$ at~$v$, one has:
 \begin{equation*}
   \sum_{c\in\C}(-1)^{c}f(c)\lambda(v\cdot c)=0\,.
 \end{equation*}

Decomposing $\C$ into those $c\in\C$ such that
$c\indep c_{n-1}$ and those $c\in\C$ such that $\neg(c\indep c_{n-1})$,
and re-injecting in~(\ref{eq:28}) yields:
\begin{align}
\notag
Z_n&=\frac1{g(c_{n-1})}\sum_{c\in\C \tq c\indep
  c_{n-1}}(-1)^{|c|}f(c)\lambda(v\cdot c)\,,\\
\intertext{and with the change of variable $\delta=c\cdot c_{n-1}$:}
\label{eq:27}
Z_n&=\frac1{g(c_{n-1})}
\sum_{\delta\in\C\tq\delta\geq c_{n-1}}
(-1)^{|\delta|-|c_{n-1}|}\frac{f(\delta)}{f(c_{n-1})}
\lambda(w\cdot\delta)\,,
\end{align}
where $w=c_1\cdot\ldots\cdot c_{n-2}$\,. But
$g(c_{n-1})f(c_{n-1})=h(c_{n-1})$, as recalled
in~(\ref{eq:30}). Henceforth, Equation~(\ref{eq:27}) writes as
$Z_n=Y_{n-1}$, which was to be shown.
\end{proof}

Since the sequence $(Y_n,\FFF_n)_{n\geq1}$ is a bounded martingale, it
converges \pas\ and in the space $L^1(\B\M)$ to a limit $\varphi\in
L^\infty(\B\M)$. It is natural to expect that this limit is the
adequate candidate for the integral representation of~$\lambda$. This
is true indeed, and the proof of this fact is based on the inversion
formula for graded M\"obius transforms proved
in~\S~\ref{sec:prel-comb-results} above. Therefore Theorem~\ref{thr:2}
below provides a probabilistic interpretation of the inversion
formula.

\begin{theorem}
  \label{thr:2}
For every bounded M\"obius harmonic function $\lambda:\M\to\RR$, there
exists a unique $\varphi\in L^\infty(\B\M)$ such that:
\begin{equation}
  \label{eq:33}
  \forall u\in\M\quad
\lambda(u)=\frac1{\pr(\up u)}\int_{\up u}\varphi\,d\pr\,.
\end{equation}
The above formula establishes a bijective and isometric linear
correspondence between $L^\infty(\B\M)$ and the space of bounded
M\"obius harmonic functions on~$\M$. Both this correspondence and its
inverse are positive operators.
\end{theorem}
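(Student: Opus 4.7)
The plan is to build the boundary function~$\varphi$ as the $\pr$-a.s.\ limit of the martingale of Lemma~\ref{lem:1} and to identify its integral on each cylinder with $f(u)\lambda(u)$ using Theorem~\ref{thr:1}. Since $(Y_n,\FFF_n)_{n\geq1}$ is a bounded (hence uniformly integrable) martingale, it converges $\pr$-a.s.\ and in~$L^1$ to some $\varphi\in L^\infty(\B\M)$ with $Y_n=\esp(\varphi|\FFF_n)$. This $\varphi$ is the candidate for the integral representation.

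For the integral formula~(\ref{eq:33}), fix $u\in\M$ of height~$n$ with Cartier-Foata decomposition $c_1\to\ldots\to c_n$. The description~(\ref{eq:20})--(\ref{eq:21}) of~$\M(u)$ combined with~(\ref{eq:15}) yields the disjoint decomposition $\up u=\bigsqcup_{x\in\M(u)}\{C_1=d_1,\ldots,C_n=d_n\}$, where $x=d_1\cdot\ldots\cdot d_n$. On the atom indexed by~$x$, using $\pr(C_1=d_1,\ldots,C_n=d_n)=h(x)$ from~(\ref{eq:22}), the factorisation $h(x)=f(d_1\cdot\ldots\cdot d_{n-1})h(d_n)$, and the explicit form~(\ref{eq:25}) of~$Y_n$, one finds
\begin{equation*}
\int_{\{C_1=d_1,\ldots,C_n=d_n\}}\varphi\,d\pr\;=\;h(x)\,Y_n\;=\;\sum_{c\in\C\tq c\geq d_n}(-1)^{|c|-|d_n|}F(d_1\cdot\ldots\cdot d_{n-1}\cdot c)\,,
\end{equation*}
where $F=f\cdot\lambda$. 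The right-hand side is exactly the value $H(x)$ of the graded M\"obius transform of~$F$ at~$x$; summing over $x\in\M(u)$ and invoking Theorem~\ref{thr:1} gives $\int_{\up u}\varphi\,d\pr=\sum_{x\in\M(u)}H(x)=F(u)=f(u)\lambda(u)$, which is precisely~(\ref{eq:33}). Uniqueness of~$\varphi$ up to $\pr$-negligibility then follows from the standard $\pi$-$\lambda$ argument on the generating $\pi$-system of elementary cylinders.

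Linearity is immediate and the inequality $\|\lambda\|_\infty\leq\|\varphi\|_\infty$ is direct from~(\ref{eq:33}). For the positivity statement and the reverse bound, the key observation is that if $\lambda\geq0$, then $\nu(\up u)=f(u)\lambda(u)$ defines a non-negative set function on the $\pi$-system of cylinders, and the M\"obius harmonicity relation~(\ref{eq:23}) is precisely the consistency that lets it extend---by Carath\'eodory and the compactness of~$\B\M$ with its clopen cylinder basis---to a positive Borel measure on~$(\B\M,\FFF)$ dominated by $\|\lambda\|_\infty\pr$. This extension agrees with $A\mapsto\int_A\varphi\,d\pr$ on cylinders, hence on~$\FFF$ by the uniqueness just established, whence $\varphi\geq0$ $\pr$-a.s. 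The constant~$M$ corresponds to the constant~$M$ under the bijection (using $h(0)=0$), so applying the positivity to the bounded M\"obius harmonic functions $\|\lambda\|_\infty\pm\lambda\geq0$ yields $|\varphi|\leq\|\lambda\|_\infty$ $\pr$-a.s., completing the isometry. Surjectivity is immediate from Proposition~\ref{prop:2}: any $\varphi\in L^\infty(\B\M)$ produces a bounded M\"obius harmonic~$\lambda$, and applying the existence/uniqueness result recovers the original~$\varphi$.

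The main obstacle is the computation in the second paragraph---the identification of $h(x)Y_n$ with the graded M\"obius transform $H(x)$ of $F=f\cdot\lambda$---since this is the decisive algebraic bridge that allows the combinatorial inversion formula of Theorem~\ref{thr:1} to convert the martingale convergence of Lemma~\ref{lem:1} into the desired Poisson-type representation.
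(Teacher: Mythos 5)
Your existence argument and the computation of $\int_{\up u}\varphi\,d\pr$ are exactly the paper's: the martingale limit of Lemma~\ref{lem:1}, the disjoint decomposition of $\up u$ into the $\FFF_n$-atoms indexed by $\M(u)$, the identification of $h(x)Y_n$ with the graded M\"obius transform $H(x)$ of $F=f\lambda$, and the inversion formula of Theorem~\ref{thr:1}. Your uniqueness argument is a genuine (and legitimate) shortcut: since $\{A\in\FFF\tq\int_A(\varphi_1-\varphi_2)\,d\pr=0\}$ is a $\lambda$-system containing the generating $\pi$-system of cylinders (and $\B\M=\up 0$), Dynkin's lemma gives $\varphi_1=\varphi_2$ \pas. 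The paper instead proves uniqueness by explicitly recomputing $\esp(\varphi|\FFF_n)$ on each atom via~(\ref{eq:15}) and inclusion--exclusion, recovering the expression~(\ref{eq:25}); that longer route is what actually justifies the \emph{a priori} form of $Y_n$ and is reused in Corollary~\ref{cor:1}, but for uniqueness alone your argument suffices.

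The gap is in your positivity argument for $\Psi^{-1}$. You assert that, for $\lambda\geq0$ M\"obius harmonic, the set function $\up u\mapsto f(u)\lambda(u)$ extends by Carath\'eodory to a \emph{positive} measure, with~(\ref{eq:23}) as ``the consistency condition.'' But Carath\'eodory requires the set function to be additive and non-negative on the ring generated by the cylinders, not just on the cylinders themselves. The atoms of that ring at level $n$ are the sets $\{C_1=c_1,\ldots,C_n=c_n\}=\up u\setminus\bigcup_{c>c_n}\up(v\cdot c)$ of~(\ref{eq:15}), and the value your set function is forced to assign there is the alternating sum $f(v)\sum_{c\geq c_n}(-1)^{|c|-|c_n|}f(c)\lambda(v\cdot c)$. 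Its non-negativity is precisely the statement of Corollary~\ref{cor:1}, which the paper \emph{deduces from} Theorem~\ref{thr:2}; it is not a consequence of $\lambda\geq0$ and~(\ref{eq:23}) alone (Section~\ref{sec:non-negative-mobius} exhibits an unbounded non-negative harmonic function for which it fails). So as written this step is circular, or at best unproved. Note also that the inequality version of your $\pi$-$\lambda$ argument is not available here: $\{A\tq\int_A\varphi\,d\pr\geq0\}$ is not a $\lambda$-system. The repair is the paper's Lemma~\ref{lem:2}: you already know $\int_{\up u}\varphi\,d\pr=f(u)\lambda(u)\geq0$ for all $u$, and that alone forces $\varphi\geq0$ \pas; positivity of $\Psi$, $\Psi(1)=1$, and the functions $\|\lambda\|_\infty\pm\lambda$ then give the isometry exactly as you conclude.
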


\begin{proof}
  Let $\lambda:\M\to\RR$ be a bounded M\"obius harmonic function.  Let
  $\varphi\in L^\infty(\B\M)$ be the limit, \pas\ and in~$L^1(\B\M)$,
  of the martingale $(Y_n,\FFF_n)_{n\geq1}$ defined as in
  Lemma~\ref{lem:1}. We prove
  that~(\ref{eq:33}) holds for this function~$\varphi$. 

  Let $u\in\M$ be a trace.  To compute the integral of $\varphi$
  over~$\up u$, we rely on the description~(\ref{eq:14}) of $\up u$
  stated in~\S~\ref{sec:order-cart-foata}. Let $n=\tau(n)$ be the
  height of~$u$. Then, by~(\ref{eq:14}), and denoting as in the proof
  of Theorem~\ref{thr:1}:
  \begin{align*}
    \M(u)=\{x\in\M\tq\tau(x)=n\wedge u\leq x\}\,,
  \end{align*}
one has:
  \begin{align*}
    \up u=\{\xi\in\B\M\tq C_1\cdot\ldots\cdot C_n\geq u\}
=\bigcup_{x\in\M(u)}\{\xi\in\B\M\tq C_1\cdot\ldots\cdot C_n=x\}\,,
  \end{align*}
the last union being disjoint. Accordingly, and using
formula~(\ref{eq:22}) found above, one has:
\begin{align*}
  \int_{\up u}\varphi\,d\pr&=\sum_{x\in\M(u)}\int_{C_1\cdot\ldots\cdot
  C_n=x}\varphi\,d\pr=\sum_{x\in\M(u)}h(x)\esp(\varphi|C_1\cdot\ldots\cdot C_n=x)\,.
\end{align*}
By definition of the conditional expectation
$Y_n=\esp(\varphi|\FFF_n)$, using the expression~(\ref{eq:25}) and
since $\FFF_n=\sigma\langle C_1,\ldots,C_n\rangle$, we deduce:
 \begin{align}
 \label{eq:35}
   \int_{\up u}\varphi\,d\pr&=
 \sum_{x\in\M(u)}f(y)\sum_{c\in\C\tq c\geq
   \gamma_n}(-1)^{|c|-|\gamma_n|}f(c)\lambda(y\cdot c)\,,
 \end{align}
where $x=y\cdot \gamma_n$ is the decomposition of a generic element
$x\in\M(u)$ such that $\gamma_n$ is the last clique in the
Cartier-Foata decomposition of~$x$.

Let $F:\M\to\RR$ be the function defined by $F(u)=f(u)\lambda(u)$, and
let $H$ be the graded M\"obius transform of~$F$ (see
Definition~\ref{def:1}). Since $f$ is multiplicative,
Equation~(\ref{eq:35}) writes as:
 \begin{align*}
   \int_{\up u}\varphi\,d\pr&=\sum_{x\in\M(u)}H(x)\\
 &=F(u)&\text{by Theorem~\ref{thr:1}}\\
 &=f(u)\lambda(u)\,.
 \end{align*}
This shows formula~(\ref{eq:33}).

\medskip We have shown the existence of~$\varphi$, and we now focus on
its uniqueness. It is enough to show that $\varphi=\lim_{n\to\infty}
Y_n$ necessarily holds \pas\ if $\varphi\in L^\infty(\B\M)$
satisfies~(\ref{eq:33}). Hence, consider $\varphi\in
L^\infty(\B\M)$. The limit
$\varphi=\lim_{n\to\infty}\esp(\varphi|\FFF_n)$ holds \pas, since
$\bigvee_{n\geq1}\FFF_n=\FFF$\,, as attested by~(\ref{eq:14}). We are
thus bound to prove $\esp(\varphi|\FFF_n)=Y_n$\,. For this, we compute
as follows, considering a sequence $c_1\to\ldots\to c_n$ of non empty
cliques, and putting $u=c_1\cdot\ldots\cdot c_n$ and
$v=c_1\cdot\ldots\cdot c_{n-1}$ (this is the computation we promised just
above Lemma~\ref{lem:1}):
\begin{align}
\notag
  \esp(\varphi|C_1=c_1,\ldots,C_n=c_n)&=\frac1{h(u)}\int_{C_1=c_1,\ldots,C_n=c_n}\varphi\,d\pr\\
\label{eq:34}
&=\frac1{h(u)}\biggl(\int_{\up u}\varphi\,d\pr-
\underbrace{\int_{\bigcup_{c\in\C\tq
    c>c_n}\up(v\cdot c)}\varphi\,d\pr}_{L}\ 
\biggr)\,,
\end{align}
the later equality according to~(\ref{eq:15}). 

Let $\{a_1,\ldots,a_k\}$ be an enumeration of those letters
$a\in\Sigma$ such that $a\indep c_n$. Then it is obvious that:
\begin{equation*}
  \bigcup_{c\in\C\tq c>c_n}\up(v\cdot c)=\bigcup_{i=1}^k\up(v\cdot c_n\cdot a_i)\,.
\end{equation*}

Applying Poincar\'e inclusion-exclusion principle as in the proof of
Proposition~\ref{prop:2}, we deduce the following expression
for~$L$ defined in~(\ref{eq:34}) above:
\begin{align*}
  L&=\sum_{r=1}^k(-1)^{r+1}\sum_{1\leq i_1<\ldots<i_r\leq
    k}\int_{\up(v\cdot c_n\cdot a_{i_1})\cap\cdots\cap\up(v\cdot c_n\cdot
    a_{i_r})}\varphi\,d\pr\\
&=\sum_{c\in\C\tq c>c_n}(-1)^{|c|+1}\int_{\up(v\cdot c)}\varphi\,d\pr\,.
\end{align*}

Returning to~(\ref{eq:34}), we obtain:
\begin{gather*}
  \esp(\varphi|C_1=c_1,\ldots,C_n=c_n)=\frac1{h(u)}\sum_{c\in\C\tq
  c\geq c_n}(-1)^{|c|-|c_n|}f(v\cdot c)\lambda(v\cdot c)\,.
\end{gather*}

But $h(u)=f(v)h(c_n)$ and $f(v\cdot c)=f(v)f(c)$, hence we obtain the
expected expression~(\ref{eq:25}) defining $Y_n$ for
$\esp(\varphi|\FFF_n)$. This proves the uniqueness of~$\varphi$. 

\medskip Let $\MH^\infty(\M)$ denote the linear space of bounded
M\"obius harmonic functions of~$\M$, and let
$\Psi:L^\infty(\B\M)\to\MH^\infty(\M)$ be the transformation defined
by~(\ref{eq:33}). It is obvious that $\Psi$ is linear, and we have
shown that $\Psi$ is bijective; it remains only to show that $\Psi$
and $\Psi^{-1}$ are positive and isometric.

It is obvious on the expression~(\ref{eq:33}) that $\Psi$ is a
positive operator ($\varphi\geq0\implies\lambda\geq0$). And since
$f=\pr(\up\cdot)>0$ on $\M$ by assumption, the fact that $\Psi^{-1}$
is also positive follows from Lemma~\ref{lem:2} below.  Since $\Psi$
and $\Psi^{-1}$ are positive operators, and since $\Psi(1)=1$, it is
an easy consequence that they are isometric.
\end{proof}

\begin{lemma}
  \label{lem:2}
  If\/ $\varphi\in L^\infty(\B\M)$ is such that $\int_{\up
    u}\varphi\,d\pr\geq0$ holds for all \mbox{$u\in\M$}, then
  $\varphi\geq0$ holds\, \pas\ on~$\B\M$.
\end{lemma}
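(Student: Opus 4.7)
The plan is to identify those cylinders $\up u$ that happen to be atoms of the filtration $\FFF_n=\sigma\langle C_1,\ldots,C_n\rangle$, and to combine this identification with L\'evy's upward theorem. Since $\bigvee_{n\geq1}\FFF_n=\FFF$, as recalled in the proof of Theorem~\ref{thr:2}, the martingale $\esp(\varphi|\FFF_n)$ converges \pas\ to~$\varphi$. It therefore suffices to exhibit, for $\pr$-a.e.\ $\xi\in\B\M$, an increasing sequence of integers $(n_k)_{k\geq1}$ along which $\esp(\varphi|\FFF_{n_k})(\xi)\geq0$.

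Call a trace $u\in\M$ \emph{strong} if the last clique $c$ of its Cartier-Foata decomposition is a maximal clique of the independence graph $(\Sigma,I)$, in the sense that no letter of $\Sigma\setminus c$ commutes with every letter of~$c$. For such $u$, the condition $\gamma_i\indep c_j$ from~\eqref{eq:21} taken with $j=\tau(u)$ forces $\gamma_i=0$ for every $i\leq\tau(u)$, so $\M(u)=\{u\}$, and the cylinder $\up u$ coincides with the $\FFF_{\tau(u)}$-atom $\{C_1\cdot\ldots\cdot C_{\tau(u)}=u\}$. On that atom, $\esp(\varphi|\FFF_{\tau(u)})$ is constant and equal to $\pr(\up u)^{-1}\int_{\up u}\varphi\,d\pr$, which is non-negative by hypothesis.

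The crux is then to verify that, for $\pr$-a.e.\ $\xi$, the Cartier-Foata sequence $(C_k(\xi))_{k\geq1}$ visits a maximal clique of $(\Sigma,I)$ infinitely often. Since $\M$ is irreducible, the graph $(\Sigma,D)$ is connected, and a combinatorial reachability argument shows that the Cartier-Foata transition graph on $\Cstar$ is strongly connected---given any clique $c$ one may step to a singleton $\{b\}$ with $b\in c$, walk between singleton cliques along a $D$-path in~$\Sigma$, and then reach an arbitrary target clique by an appropriate extension step. Combined with the positivity of $h$ on $\Cstar$ from~\eqref{eq:12}, this makes the finite-state Markov chain $(C_n)$ irreducible and hence recurrent, so every state---in particular each maximal clique of $(\Sigma,I)$, which exists because $\Sigma$ is finite---is visited \pas\ infinitely often.

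The proof now assembles readily. For $\pr$-a.e.\ $\xi$, choose $n_k\to\infty$ with $C_{n_k}(\xi)$ maximal and set $u_k=C_1(\xi)\cdot\ldots\cdot C_{n_k}(\xi)$, a strong prefix of~$\xi$. The second paragraph yields $\esp(\varphi|\FFF_{n_k})(\xi)\geq0$ for every~$k$, and L\'evy's upward theorem gives $\varphi(\xi)=\lim_k\esp(\varphi|\FFF_{n_k})(\xi)\geq0$. The main obstacle in this scheme is the combinatorial recurrence statement of the third paragraph; once that is granted, the remainder of the argument is standard martingale theory.
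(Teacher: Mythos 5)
Your strategy is genuinely different from the paper's: the paper disposes of the lemma in two lines, observing that the elementary cylinders (plus the empty set) form a family stable under finite intersections that generates $\FFF$, and invoking the Monotone Class theorem; you instead run the martingale $\esp(\varphi|\FFF_n)$ and isolate those cylinders that are actually atoms of the filtration. The parts of your argument that are carried out are correct: if the last Cartier--Foata clique $c$ of $u$ is maximal, then conditions \eqref{eq:20}--\eqref{eq:21} with $j=\tau(u)$ force all $\gamma_i=0$, so $\M(u)=\{u\}$ and \eqref{eq:14} identifies $\up u$ with the $\FFF_{\tau(u)}$-atom $\{C_1\cdot\ldots\cdot C_{\tau(u)}=u\}$, on which the conditional expectation equals $\pr(\up u)^{-1}\int_{\up u}\varphi\,d\pr\geq0$; and $\bigvee_n\FFF_n=\FFF$ justifies L\'evy's upward theorem. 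So everything reduces, as you say, to the recurrence of maximal cliques.

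That is where there is a genuine gap. Your route to irreducibility of the chain on $\Cstar$ --- ``step to a singleton, walk along a $D$-path, then reach the target clique by an extension step'' --- breaks at the last move. A singleton $\{a\}$ satisfies $\{a\}\to c'$ only when $a$ is dependent on \emph{every} letter of $c'$, and for $|c'|\geq2$ no such letter need exist: take $\Sigma=\{x,y,z,w\}$ with $I$ generated by $(x,y)$, $(x,w)$, $(y,z)$; then $(\Sigma,D)$ is connected but no letter is dependent on both $x$ and $y$, so no singleton leads to the clique $\{x,y\}$ in one step. Nor can one ``extend'' gradually, since $\{b_1\}\to\{b_1,b_2\}$ is never Cartier--Foata admissible when $(b_1,b_2)\in I$. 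The strong connectivity you need is nevertheless true, but requires a real argument; one that works is a decreasing-potential induction: fix $r\in\Sigma$, let $\delta$ be the graph distance to $r$ in $(\Sigma,D)$, and given a clique $c'\neq\{r\}$ choose $b\in c'$ maximizing $\delta$ and a $D$-neighbour $a$ of $b$ with $\delta(a)=\delta(b)-1$; then $d=\{a\}\cup\{b'\in c'\tq (a,b')\in I\}$ is a clique with $d\to c'$ and $\sum_{x\in d}\delta(x)<\sum_{x\in c'}\delta(x)$, so iterating reaches the singleton $\{r\}$. Combined with $c\to\{b\}$ for $b\in c$, the $D$-connectivity of singletons, and $h>0$ on $\Cstar$ from \eqref{eq:12}, this makes the chain irreducible on the finite set $\Cstar$, hence recurrent, and your proof then closes. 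As it stands, however, the key combinatorial step is asserted with an incorrect justification, so the proof is incomplete without this repair.
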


\begin{proof}
  The collection of elementary cylinders, to which is added the empty
  set, is stable by finite intersections. Hence the lemma is an
  application of the Monotone Class theorem, since $\FFF$ is the
  \slgb\ generated by all elementary cylinders~$\up u$, for $u\in\M$.
\end{proof}

\begin{corollary}
  \label{cor:1}
Let $\lambda:\M\to\RR$ be a bounded and non negative M\"obius harmonic
function. Then for any trace $u\in\M$, if $c$ is the last clique in
the Cartier-Foata decomposition of~$u$, one has:
\begin{equation}
\label{eq:44}
  \sum_{\delta\in\C\tq\delta\indep c}(-1)^{|\delta|}f(\delta)\lambda(u\cdot\delta)\geq0.
\end{equation}
\end{corollary}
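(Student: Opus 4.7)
The plan is to deduce the inequality from the probabilistic representation given by Theorem~\ref{thr:2} together with the explicit formula for the martingale $(Y_n)_{n\geq 1}$ from Lemma~\ref{lem:1}. First I would apply Theorem~\ref{thr:2} to the bounded M\"obius harmonic function $\lambda$ to obtain a unique $\varphi\in L^\infty(\B\M)$ satisfying~(\ref{eq:33}). Since $\lambda\geq 0$ and the inverse operator $\Psi^{-1}$ is a positive operator (as stated in Theorem~\ref{thr:2}), one gets $\varphi\geq 0$\, \pas

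Next I would bring in the martingale $(Y_n,\FFF_n)_{n\geq 1}$ defined by~(\ref{eq:25}), which by construction is $\esp(\varphi|\FFF_n)$. The key point is that, $\varphi\geq 0$ \pas\ immediately gives $Y_n\geq 0$\, \pas\ Writing the Cartier-Foata decomposition of $u$ as $c_1\to\ldots\to c_n$, so that $c=c_n$ is the last clique and putting $v=c_1\cdot\ldots\cdot c_{n-1}$, the atom $\{C_1=c_1,\ldots,C_n=c_n\}$ has positive probability $h(u)>0$, hence $Y_n\geq 0$ holds on that atom as well.

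Then I would simply rewrite the expression~(\ref{eq:25}) of $Y_n$ on this atom via the change of variable $c=c_n\cdot\delta$ with $\delta\indep c_n$, using the multiplicativity $f(c_n\cdot\delta)=f(c_n)f(\delta)$ and the identity $u=v\cdot c_n$, to obtain
\begin{equation*}
Y_n=\frac{f(c_n)}{h(c_n)}\sum_{\delta\in\C\tq\delta\indep c_n}(-1)^{|\delta|}f(\delta)\lambda(u\cdot\delta)\,.
\end{equation*}
Since $f(c_n)>0$ and $h(c_n)>0$ (the latter by~(\ref{eq:12})), the sign of $Y_n$ coincides with the sign of the sum in~(\ref{eq:44}), and the inequality follows.

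No serious obstacle is expected: the statement is essentially the positivity of $Y_n=\esp(\varphi|\FFF_n)$ translated through the explicit formula~(\ref{eq:25}); the only care needed is in the bookkeeping of the change of variable and in invoking positivity of $\Psi^{-1}$ from Theorem~\ref{thr:2} (which itself rests on Lemma~\ref{lem:2}).
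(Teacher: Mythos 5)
Your proposal is correct and follows essentially the same route as the paper: obtain $\varphi\geq0$ from the positivity of $\Psi^{-1}$ in Theorem~\ref{thr:2}, deduce $Y_n=\esp(\varphi|\FFF_n)\geq0$, evaluate $Y_n$ on the atom $\{C_1=c_1,\ldots,C_n=c_n\}$ via~(\ref{eq:25}), and conclude by the change of variable $c=c_n\cdot\delta$ together with $h(c_n)>0$ and the multiplicativity of~$f$. The bookkeeping in your displayed formula for $Y_n$ is accurate, so nothing further is needed.
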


\begin{remark}
  The result of the corollary is not obvious, because of the presence
  of negative terms in the sum. We shall see
  in~\S~\ref{sec:non-negative-mobius} below an example
  where~(\ref{eq:44}) does not hold for a non negative
  \emph{unbounded} M\"obius harmonic function.
\end{remark}

\begin{proof}[Proof of Corollary~\ref{cor:1}.]
  Let $u\in\M$ and $\lambda:\M\to\RR$ be as in the statement, and let
  $\varphi\in L^\infty(\B\M)$ be associated to $\lambda$ as in
  Theorem~\ref{thr:2}. Let $c_1\to\ldots\to c_n$ be the Cartier-Foata
  decomposition of~$u$, and put $v=c_1\cdot\ldots\cdot c_{n-1}$\,.
 
  Then Theorem~\ref{thr:2} states that $\varphi\geq0$ holds \pas\
  on~$\B\M$. Therefore $Y_n=\esp(\varphi|\FFF_n)$ is \pas\ non
  negative on~$\B\M$. Evaluating $Y_n$ on the atom
  $\{C_1=c_1,\ldots,C_n=c_n\}$ of~$\FFF_n$ according to the
  expression~(\ref{eq:25}) for~$Y_n$, which was derived in the course
  of the proof of Theorem~\ref{thr:2}, yields:
  \begin{gather*}
\frac1{h(c_n)}    \sum_{c\in\C\tq c\geq
  c_n}(-1)^{|c|-|c_n|}f(c)\lambda(v\cdot c_n)\geq0\,.
  \end{gather*}

  Since $h>0$ on $\Cstar$ by~(\ref{eq:12}), and since $f$ is
  multiplicative, the result follows from the change of variable
  $c=c_n\cdot\delta$ in the above sum.
\end{proof}

\section{Additional Remarks}
\label{sec:non-negative-mobius}

In this section, we examine some examples of unbounded M\"obius
harmonic functions that arise naturally. We consider as above a pair
$(\M,\pr)$, where $\M$ is an irreducible trace monoid, and $\pr$ is a
Bernoulli measure on~$(\B\M,\FFF)$.  As usual we put $f(u)=\pr(\up
u)$ for $u\in\M$, and $h:\M\to\RR$ is defined as the graded M\"obius
transform of~$f$. 

The first observation is that, if $\nu$ is any finite measure on
$(\B\M,\FFF)$, then the function $\lambda:\M\to\RR$ defined by:
\begin{equation}
  \label{eq:37}
  \forall u\in\M\quad \lambda(u)=\frac1{\pr(\up u)}\nu(\up u)\,,
\end{equation}
is M\"obius harmonic. The proof is similar to the proof of
Proposition~\ref{prop:2}. It is also a reformulation of
Proposition~2.1 of~\cite{abbesmair14}. Note that
applying~(\ref{eq:37}) to the measure $d\nu=\varphi d\pr$ brings back
the result of Proposition~\ref{prop:2} on the M\"obius harmonicity
of~$\lambda$. Contrary to the result of Proposition~\ref{prop:2}
however, in general the function $\lambda$ defined
in~(\ref{eq:37}) is unbounded.

The \emph{Green kernel} of $(\M,\pr)$ is defined by:
\begin{equation}
  \label{eq:38}
  \forall x,y\in\M\quad
G(x,y)=
\begin{cases}
 f(y)/f(x),&\text{if $x\leq y$,}\\
0,&\text{otherwise.}
\end{cases}
\end{equation}

The fact that our basic object $\M$ is a semi-group rather than a
group makes that $G$ is not positive on~$\M\times\M$\,.  For $y\in\M$,
put $G_y=G(\,\cdot\,,y)$, and define $\Delta G_y:\M\to\RR$ by:
\begin{equation}
  \label{eq:39}
\forall x\in\M\quad  \Delta G_y(x)=\sum_{c\in\C}(-1)^{|c|}f(c)G_y(x\cdot
c)\,.
\end{equation}

Easy calculations show that:
\begin{equation}
  \label{eq:40}
\forall x,y\in\M\quad  \Delta G_y(x)=
\begin{cases}
  1,&\text{if $x=y$,}\\
0,&\text{otherwise}.
\end{cases}
\end{equation}

In other words, extending in the obvious way Definition~\ref{def:1}
to functions M\"obius harmonic on a subset of~$\M$,
$G_y$~is M\"obius harmonic on $\M\setminus\{y\}$\,.  Therefore $y$
appears as the unique singularity of~$G_y$\,. The standard idea from
Martin theory is to ``send the singularity at infinity''. We define
the \emph{Martin kernels}~$K_y$\,, for $y$ ranging over~$\M$, as
follows:
\begin{equation}
  \label{eq:41}
  \forall y\in\M\quad
  K_y=\frac{G(\,\cdot\,,y)}{G(0,y)}=\un{\,\cdot\,\leq y}\frac1{f(\,\cdot\,)}\,.
\end{equation}

Sending $y$ to infinity consists in taking a limit along a sequence
$(y_n)_{n\geq1}$ of traces converging to a point $\xi\in\B\M$ of the
boundary. The topological framework on $\Mbar=\M\cup\B\M$ does not
present any particular difficulty; the easiest way is to simply
identify generalized traces with their Cartier-Foata decomposition,
and to use the standard metric constructions on sequences, either
finite or infinite, taking values in the finite set~$\Cstar$. Hence we
use this topological framework without stating more formal
definitions. The space of M\"obius harmonic functions is then endowed
with the pointwise convergence; any limit of M\"obius harmonic
functions is M\"obius harmonic.

Within this framework, it is visible on~(\ref{eq:41}) that, if
$y_n\to\xi\in\B\M$, then $(K_{y_n})_{n\geq1}$ converges to
$K_\xi:\M\to\RR$\,, defined by:
\begin{equation}
  \label{eq:42}
\forall \xi\in\B\M\quad  \forall x\in\M\quad K_\xi(x)=\frac1{f(x)}\un{x\leq\xi}\,,
\end{equation}
which is M\"obius harmonic, this time on~$\M$. The Martin kernel
$K_\xi$ thus defined corresponds to the harmonic function defined as
in~(\ref{eq:37}) with respect to the Dirac measure $\delta_\xi$
concentrated on~$\xi$. It is obviously unbounded.

In general, if $\nu$ is a finite measure on $\B\M$ such that the
associated M\"obius harmonic function $\lambda$ is bounded on~$\B\M$,
then $\nu$ is regular with respect to~$\pr$ and $\varphi\in
L^\infty(\B\M)$ associated to $\lambda$ by Theorem~\ref{thr:2}
coincides \pas\ with the Radon-Nykodim derivative~$d\nu/d\pr$\,. This
is obviously not the case for the Dirac measures~$\delta_\xi$\,.

\medskip Does every non negative M\"obius harmonic function
$\lambda:\M\to\RR$ originate from a---necessarily finite---measure
$\nu$ on~$\B\M$ as in~(\ref{eq:37})?  The answer is negative, as the
following example reveals.

Assume that $\pr$ is the \emph{uniform Bernoulli measure} on~$\B\M$,
which is defined by $\pr(\up u)=p_0^{|u|}$ for every trace $u\in\M$,
where $p_0$ is the unique root of smallest modulus of the M\"obius
polynomial~$\mu_\M(X)$ (see~\S~\ref{sec:mobi-transf-mobi}). Let $p$ be
another non negative root of~$\mu_\M(X)$, if it exists, and define
$\lambda:\M\to\RR$ by:
\begin{equation}
  \label{eq:43}
  \forall u\in\M\quad \lambda(u)=\Bigl(\frac p{p_0}\Bigr)^{|u|}\,.
\end{equation}

Then $\lambda$ is M\"obius harmonic, and it is clearly unbounded since
$p>p_0$\,. We claim that \emph{there exists no finite measure $\nu$ on
  $(\B\M,\FFF)$ such that~\eqref{eq:37} would hold for $\lambda$
  and~$\nu$.}

By contradiction, assume that $\nu$ exists. Then we would have
$\nu(\up u)=p^{|u|}$ for all $u\in\M$, and in particular $\nu$ would
be a probability measure assigning an equal probability to all
cylinders $\up u$ for $u$ ranging over traces of a fixed length. But
then, it follows from the---rather difficult---result
of~\cite[Th.~5.1~point~2]{abbesmair14} that $\nu(\up u)=p_0^{|u|}$
for all $u\in\M$, a contradiction. The claim is proved.

It remains only to check that there exists irreducible trace monoids
with different real non negative roots of their M\"obius
polynomials. It is easy to find such examples. Consider for instance
the trace monoid generated by $(\Sigma,I)$ depicted on
Figure~\ref{fig:qqka,pq}---it was already worked out
in~\cite[\S6]{abbesmair14}. Then $\mu_\M(X)=1-5X+5X^2$ has the two
roots $p_0=\frac12-\frac{\sqrt5}{10}$ and
$p_1=\frac12+\frac{\sqrt5}{10}$\,. The uniform Bernoulli measure on
$\B\M$ is characterized by $\pr(\up u)=p_0^{|u|}$\,, for all
$u\in\M$\,. This example is specially interesting since the seond
root~$p_1$ lies itself in the interval~$(0,1)$. Henceforth the
function $u\in\M\mapsto p_1^{|u|}$ satisfies $u\leq v\implies
p_1^{|v|}\leq p_1^{|u|}$; whereas for values $p>1$, this is the
reverse ordering, which disqualifies $p^{|u|}$ at once for being
represented as $p^{|u|}=\nu(\up u)$ for any measure~$\nu$.

\begin{figure}
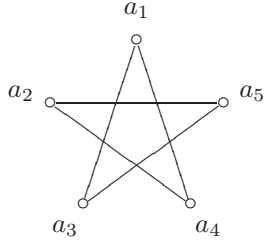

  \centering
  \begin{equation*}
  \xy
{\xypolygon5"A"{~:{(2,2):(3,3):}~={0}~><{@{}}{\circ}
}%
},%
{\xypolygon5"B"{~:{(2,2):(4,4):}~><{@{}}~={0}{a_{\xypolynode}}}},
"A1";"A3"**@{-};"A5"**@{-};"A2"**@{-};"A4"**@{-};"A1"**@{-}
\endxy
  \end{equation*}  
  \caption{\textsl{An independence graph $(\Sigma,I)$}}
  \label{fig:qqka,pq}
\end{figure}

\medskip Typically, this example provides a non negative M\"obius
harmonic function $\lambda(u)=(p_1/p_0)^{|u|}$ such that~(\ref{eq:44})
does not hold. Indeed, evaluating the left hand member
of~(\ref{eq:44}) at trace $u=a_1$ for
$\lambda(u)=(p_1/p_0)^{|u|}$ yields:
  \begin{equation*}
\lambda(a_1)-f(a_3)\lambda(a_1\cdot a_3)-f(a_4)\lambda(a_1\cdot a_4)=    \frac{p_1}{p_0}(1-2p_1)<0\,.
  \end{equation*}

\section{Conclusion}
\label{sec:conclusion}

This work suggests extensions in different directions. First, the
graded M\"obius transform is likely to extend to finitely presented
monoids with an adequate normal form for their elements, typically
finite type Coxeter monoids, including braid monoids.  The extension
to these monoids of the probabilistic framework of Bernoulli measures
and of M\"obius harmonicity is a reasonable target. Dealing with
groups rather than monoids is a non trivial extension, since the
partial order structure collapses.

Second, pursuing the elements of a potential theory for Bernoulli
measures, either in the framework of trace monoids or in a more
general framework, is also natural. In particular, the notion of
super-M\"obius harmonic functions has a natural definition.  A Green
representation of super-M\"obius harmonic functions seems to arise
naturally.

Finally, establishing a bridge with the theory of Poisson-Furstenberg
boundary seems to be an interesting task, despite the first
obstruction mentioned in the Introduction: M\"obius harmonic functions
are not invariant with respect to an obvious Markov operator.

\noindent
\small

\bibliographystyle{plain}
\bibliography{harmonic}

\end{document}